\documentclass[11pt]{amsart}

\usepackage[normalem]{ulem}
\usepackage{url}
\usepackage{hyperref}
\usepackage{amssymb}
\usepackage{graphicx}
\usepackage{xspace}
\usepackage[usenames,dvipsnames]{color}
\usepackage{ifpdf}

\definecolor{brown}{cmyk}{0, 0.72, 1, 0.45}
\definecolor{grey}{gray}{0.5}

\newcommand{\ignore}[1]{\relax}

\newtheorem{theorem}{Theorem}
\newtheorem{lemma}[theorem]{Lemma}
\newtheorem{claim}[theorem]{Claim}

\newtheorem{remark}[theorem]{Remark}

\newtheorem{corollary}[theorem]{Corollary}
\newtheorem{definition}[theorem]{Definition}

\renewcommand{\l}{\lambda}
\renewcommand{\a}{\alpha}

\newcommand{\set}[1]{\{ #1 \}}
\newcommand{\card}[1]{\left| #1 \right|}

\newcommand{\bool}{\set{0,1}}

\newcommand{\bi}{\bigskip\noindent}
\newcommand{\si}{\smallskip\noindent}
\newcommand{\la}{\lambda}

\newcommand{\e}{\varepsilon}

\newcommand{\pr}{\mathbb{P}}
\renewcommand{\Pr}[1]{\mathbb{P} \! \left( {#1} \right)}
\newcommand{\ex}{\mathbb{E}}
\newcommand{\E}[1]{\mathbb{E}\left[ {#1} \right]}
\newcommand{\var}{\text{Var}}

\newcommand{\by}{\times}
\newcommand{\Po}{\operatorname{Po}}

\newcommand{\cstar}{c^*}
\newcommand{\cstark}{c_k^*}

\newcommand{\one}{\boldsymbol 1}
\newcommand{\zero}{\boldsymbol 0}
\newcommand{\Neqref}[1]{\textbf{[missing eqn ref]}}
\newcommand{\cond}{\mid}

\newcommand{\ab}{\bar{\a}} 
\newcommand{\z}{\zeta}
\newcommand{\zn}{\z_0}
\newcommand{\zz}{\boldsymbol{z}}
\newcommand{\ii}{\boldsymbol{i}}
\newcommand{\jj}{\boldsymbol{j}}
\newcommand{\uu}{\boldsymbol{u}}
\newcommand{\vv}{\boldsymbol{v}}
\newcommand{\zzz}{\boldsymbol{\zeta}}
\newcommand{\zzzargs}{\zzz(c,k,\a)}

\newcommand{\HH}{H_k(\a,\zzz;c)}
\newcommand{\HHone}{H_k(\a,\zzz;1)}
\newcommand{\D}{\Delta}
\renewcommand{\d}{\delta}
\newcommand{\ceil}[1]{\left\lceil {#1} \right\rceil}
\newcommand{\floor}[1]{\left\lfloor {#1} \right\rfloor}

\newcommand{\leb}{\leq O(1) \,}
\def\parenssq(#1){\left( {#1} \right)}
\newcommand{\parens}[1]{\left( {#1} \right)}
\newcommand{\abs}[1]{\left| {#1} \right|}

\newcommand{\ak}{\a_k}
\newcommand{\dak}{\ak/3}
\newcommand{\aak}{0.99 \ak}

\newcommand{\Var}{\operatorname{Var}}
\newcommand{\trans}{\mathsf{T}} 
\newcommand{\AAmn}{\mathcal{A}_{m,n}}
\newcommand{\BBmn}{\mathcal{B}_{m,n}}
\newcommand{\CCmn}{\mathcal{C}_{m,n}}
\newcommand{\Xmn}{X_{m,n}}
\newcommand{\Xmnl}{\Xmn^{(\ell)}}
\newcommand{\Ymn}{Y_{m,n}}
\newcommand{\Ymnl}{\Ymn^{(\ell)}}
\newcommand{\DD}{\mathcal{D}}

\newcommand{\gbsxx}[1]{\relax}
\newcommand{\MM}{\overline{M}}
\newcommand{\textfrac}[2]{{#1} \left/ {#2} \right.}
\newcommand{\mm}{{M}}
\newcommand{\nn}{{N}}
\newcommand{\mnhyp}{$m,n \to \infty$ with $\lim m/n \in (2/k,1)$}
\newcommand{\mnhypi}{$m,n \to \infty$ with $\lim m/n \in (2/k,\infty)$}
\newcommand{\mnanyhyp}{$m,n \to \infty$ with $\liminf m/n>2/k$}
\newcommand{\Pola}{X(\la)}
\newcommand{\Posla}{X_s(\la)}
\newcommand{\gftwo}{\ensuremath{\mathbb{F}_2}\xspace}
\newcommand{\Ortn}{O\Big(\frac1{\sqrt n}\Big)}
\newcommand{\alnu}{a(\ell,\nu)}
\newcommand{\bmnu}{b(m-\ell,\nu)}
\newcommand{\plnu}{p(\ell,\nu)}
\newcommand{\omegan}{w(n)}

\newcommand{\vt}{\vartheta}
\renewcommand{\t}{\tau}
\newcommand{\tup}{\boldsymbol{T}}
\newcommand{\tupt}{(\tup)_t} 
\newcommand{\lak}{\la_k} 
\newcommand{\tcdot}{\!\cdot\!} 
\newcommand{\gala}{g(\a;\la)}
\newcommand{\bigint}{[\aak,1/2]}
\newcommand{\awayint}{[\aak,0.49]}
\newcommand{\botint}{[\aak,\akstar]}

\newcommand{\Hk}{H_k} 
\newcommand{\event}{S} 
\newcommand{\akstar}{{\a^*_k}}
\newcommand{\astar}{\a^*}
\newcommand{\Hkala}{H_k(\a;\la)}
\newcommand{\Hka}{H_k(\a)}
\newcommand{\ahat}{\hat{\a}}
\newcommand{\lahat}{\hat{\la}}
\newcommand{\tmbf}[1]{\textbf{\boldmath{#1}}}
\newcommand{\mysubsubsec}[1]{\smallskip \noindent \tmbf{#1}.}
\newcommand{\psiinv}{\psi^{-1}}

\newcommand{\cm}{c^{-}}

\newcommand{\botintp}{[\aak,\akp]}
\newcommand{\topintp}{[\akp,1/2]}
\newcommand{\aplus}{\a^+}
\newcommand{\akp}{{\a^+_k}}
\newcommand{\akpt}{{0.4630}}
\newcommand{\Tnu}{T(\nu)}
\newcommand{\Tnux}[1]{T(#1)}
\newcommand{\nuf}{O(1/(\z_1 \sqrt \nu))}
\newcommand{\nf}{O(1/(\z_1 \sqrt n))}
\newcommand{\Hkhat}{\Hk(\ahat,\lahat)}
\newcommand{\betak}{0.001}
\newcommand{\Hap}{H(\akpt)}
\newcommand{\sharpconst}{0.69}
\newcommand{\Bin}{\operatorname{Bin}}

\title[Satisfiability Threshold for $k$-XORSAT]
{The Satisfiability Threshold for \lowercase{k}-XORSAT}

\author[Boris Pittel]{Boris Pittel}
\thanks{$^\dagger$
This research was supported by DIMACS, Center for Discrete
Mathematics and Theoretical Computer Science, Rutgers, the State University
of New Jersey, funded by the NSF under Grant No.\
DMS06-02942, Special Focus on Discrete Random Systems, and
by the NSF under Grants No.\ DMS-0805996, No.\ DMS 1101237.
The paper was begun when the second author was a researcher in
the Department of Mathematical Sciences,
IBM T.J.\ Watson Research Center, Yorktown Heights NY 10598, USA%
}
\address[Boris Pittel]{
Department of Mathematics \\
Ohio State University \\
Columbus OH 43210, USA}
\email{bgp@math.ohio-state.edu}

\author[Gregory B. Sorkin]{Gregory B. Sorkin$^\dagger$}
\address[Gregory B. Sorkin]{
Department of Management \\
London School of Economics and Political Science \\
Houghton Street \\
London WC2A 2AE
}
\email{g.b.sorkin@lse.ac.uk}

\allowdisplaybreaks

\begin{document}

\bibliographystyle{amsalpha}

\date{\today}

\begin{abstract}
We consider ``unconstrained'' random $k$-XORSAT,
which is a uniformly random
system of $m$ linear non-ho\-mo\-ge\-ne\-ous 
equations in \gftwo
over $n$ variables,
each equation containing $k\ge 3$ variables,
and also consider a ``constrained'' model where
every variable appears in at least two equations.
Dubois and Mandler proved that
$m/n=1$ is a sharp threshold for satisfiability of
constrained $3$-XORSAT,
and analyzed the 2-core of a random 3-uniform hypergraph to
extend this result to find the threshold 
for unconstrained 3-XORSAT.

We show that $m/n=1$
remains a sharp threshold for satisfiability of 
constrained $k$-XORSAT
for every $k\ge 3$,
and we use standard results on the 2-core of a random $k$-uniform hypergraph
to extend this result to find the threshold 
for unconstrained $k$-XORSAT.
For constrained $k$-XORSAT we narrow the phase transition window,
showing that 
$m-n \to -\infty$ implies almost-sure satisfiability, while
$m-n \to +\infty$ implies almost-sure unsatisfiability.
\end{abstract}

\maketitle

\allowdisplaybreaks

\section{Introduction}
An instance of $k$-XORSAT is given by a set of $m$
linear equations in \gftwo, over $n$ variables,
each equation involving $k$ variables 
and a right hand side which is either 0 or~1.
Equivalently, it is a linear system $Ax=b$ modulo 2
in which $A$ is an $m \by n$ 0--1 matrix
each of whose row sums is $k$,
and $b$ is an arbitrary 0--1 vector.

Random instances of many problems of this sort undergo phase transitions
around some critical ratio $\cstar$ of $m/n$,
meaning that for $m,n \to \infty$ with $\lim m/n < \cstar$,
the probability that a random instance $F_{n,m}$ is satisfiable
(or possesses some similar property) approaches $1$,
while if $\lim m/n > \cstar$
the probability approaches $0$.
(There is no loss of generality in hypothesizing the existence of a limit
since, in a broad context, a result as stated 
implies the same with the weaker hypotheses 
$\liminf m/n>\cstar$ and $\limsup m/n<\cstar$.)
Friedgut~\cite{Friedgut} proved that a wide range of problems
have such sharp thresholds,
but with the possibility that the threshold $\cstar = \cstar(n)$
does not tend to a constant.
The relatively few cases in which $\cstar$ is known to be a constant include
2-SAT, by
Chv\'atal and Reed~\cite{ChRe}, Goerdt~\cite{Goerdt},
and Fernandez de la Vega~\cite{FdlV}
(with the scaling window detailed by
Bollob\'as, Borgs, Chayes, Kim, and Wilson, \cite{BBCKW}),
an extension to Max 2-SAT, by
Coppersmith, Gamarnik, Hajiaghayi, and Sorkin~\cite{CGHS},
and the pure-literal threshold for a $k$-SAT formula, by Molloy~\cite{Molloy}.

The most natural random model of the $k$-XORSAT problem is
the ``unconstrained'' model
in which
each of the $m$ equations'
$k$ variables are drawn uniformly (without replacement)
from the set of all $n$ variables,
and the right hand side values are uniformly 0 or 1;
equivalently a random instance $Ax=b$ is given by
a matrix $A \in \bool^{m \by n}$ drawn uniformly at random
from the set of all such matrices with each row sum equal to $k$,
and $b \in \bool^m$ chosen uniformly at random.

The case $k=2$ has been extensively studied. 
As shown by Kolchin~\cite{Kolchin}
and Creignon and Daud\'e~\cite{CrDa}, 
the random instance has a solution with limiting probability $p(2m/n)+o(1)$, 
where $p(x)\in (0,1)$ for $x<1$, $p(1-)=0$, and $p(x)\equiv 0$ for $x>1$. 
Daud\'e and Ravelomanana~\cite{DaRa}, and Pittel and Yeum~\cite{PiYe},
analyzed the near-critical behavior
of the solvability probability for $2m/n=1+\varepsilon$, 
$\varepsilon =o(n^{-1/4})$.

For $k>2$, Kolchin~\cite{Kolchin}
analyzed the expected number of nonempty ``critical row sets''
(nonempty collections of rows whose sum is all-even),
whose presence is necessary and sufficient for the (Boolean) rank of $A$ 
to be less than $m$. 
He determined the thresholds $c_k$ such that the expected number
of nonempty critical sets goes to 0 if $\lim m/n<c_k$ 
and to infinity if $\lim m/n>c_k$;
in particular, $c_3=0.8894\dots$.
Thus, for $\lim m/n < c_k$, with high probability
$A$ is of full rank, so $Ax=b$ is solvable.
It follows
that the satisfiability threshold $\cstar_k$ is at least $c_k$.
It is an easy observation (see Remark~\ref{>1})
that $\cstar_k \leq 1$.
However, Kolchin could not resolve the precise value, 
or even the existence, of the satisfiability threshold.

Dubois and Mandler~\cite{DMfocs} (see also \cite{DM02})
introduced a ``constrained'' random $k$-XORSAT model,
where $b$ is still uniformly random, but
$A$ is uniformly random over the subset of matrices in which
each column sum is at least~2.
For $k=3$ (3-XORSAT) they showed that its threshold for $m/n$ is~1.
This is of interest because from the threshold for the constrained model,
they were able to derive that for the unconstrained model.
Dubois and Mandler 
suggested that their methods could be extended 
to the general constrained $k$-XORSAT, $k\ge 3$. 
However, their approach 
--- the second-moment method for the number of solutions --- 
requires solving a hard maximization problem with $\Theta(k)$ variables, 
a genuinely daunting task.

Our main result is that 1 continues to be the threshold for all $k>3$.
\begin{theorem} \label{main}
Let $Ax=b$ be a uniformly random constrained
$k$-XORSAT instance with $m$ equations and $n$ variables.
Suppose $k\ge 4$. 
If \mnhyp\ 
then $Ax=b$ is asymptotically almost surely (a.a.s.) satisfiable,
with satisfiability probability $1-O(m^{-(k-2)})$,
while if 
{$m,n \to \infty$ with $\lim m/n > 1$}
then $Ax=b$ is a.a.s.\ unsatisfiable,
with satisfiability probability $O(2^{-(m-n)})$.
\end{theorem}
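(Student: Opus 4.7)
The two halves of Theorem~\ref{main} call for very different methods. For the unsatisfiability direction, a first-moment count suffices: for any fixed $x \in \bool^n$, independence of $b$ from $A$ gives $\Pr{Ax = b} = 2^{-m}$, hence $\E{\#\{\text{solutions}\}} = 2^{n-m}$, and Markov's inequality delivers $\Pr{\text{sat}} \le 2^{n-m} = O(2^{-(m-n)})$, which is $o(1)$ whenever $\lim m/n > 1$. No property of the constrained model is used on this side.

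For the satisfiability direction, write $r = m - \operatorname{rank}_{\gftwo}(A)$. Conditioned on $A$, the $\gftwo$-row space of $A$ has $2^{m-r}$ elements, so the uniform random $b$ lies in it with probability $2^{-r}$, giving $\Pr{\text{sat}} = \E{2^{-r}}$. Using $1 - 2^{-r} \le 2^r - 1$ (valid for $r \ge 0$) together with the identity that $2^r - 1$ counts the non-empty subsets $S$ of rows with $\sum_{i \in S} A_i = 0$ over $\gftwo$, the \emph{critical row sets}, one obtains
\[
1 - \Pr{\text{sat}} \;\le\; \sum_{\ell=1}^{m} \E{Z_\ell},
\]
where $Z_\ell$ is the number of critical row sets of size $\ell$. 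The plan is to show that this sum is $O(m^{-(k-2)})$ in the constrained model.

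Control of the sum splits by the size $\ell$. Because each equation has $k \ge 4$ ones, $Z_1 \equiv 0$. For $\ell = 2$, two rows are critical iff they coincide as $k$-subsets; a direct configuration-model computation, observing that conditioning on all column degrees being $\ge 2$ perturbs probabilities by only a $1 + o(1)$ factor, yields $\E{Z_2} = \Theta(m^2/n^k) = \Theta(m^{-(k-2)})$ when $m = \Theta(n)$. This matches the rate in the theorem and identifies $\ell = 2$ as the dominant contribution. For each constant $\ell \ge 3$, a critical set spans at most $k\ell/2$ variables (each of multiplicity $\ge 2$ inside the set), and a configuration count gives $\E{Z_\ell} = O(n^{-\ell(k/2 - 1)})$, which (since $k \ge 4$) is $o(m^{-(k-2)})$ and is summable. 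The serious work lies in the range $\ell = \Theta(m)$, where one needs Laplace/Stirling asymptotics on the constrained configuration-model count of the form $\E{Z_\ell} = \exp\bigl((1 + o(1))\, m\, \Phi_k(\ell/m,\, m/n)\bigr)$, together with the uniform bound $\Phi_k(\a, c) < 0$ for every $\a \in (0, 1)$ and $c \in (2/k, 1)$. Conditioning on column degrees $\ge 2$ replaces the Poisson column-degree distribution of the unconstrained model by a truncated Poisson supported on degrees $\ge 2$, and this strict improvement in the large-deviation rate is exactly what pushes the negativity of $\Phi_k$ from Kolchin's unconstrained cap $c_k \approx 0.88$ all the way up to $c = 1$.

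\textbf{Main obstacle.} Proving $\Phi_k(\a, c) < 0$ uniformly across $\a \in (0, 1)$ and $c \in (2/k, 1)$ is the crux. This is essentially the multi-parameter optimization that blocked the Dubois--Mandler second-moment argument for $k \ge 4$; the plan sidesteps it by analyzing the single-critical-set expectation $\E{Z_\ell}$ rather than the solution-count second moment $\E{N^2}/\E{N}^2$. Careful convexity and monotonicity analysis of $\Phi_k$ in $\a$, exploiting the extra concentration imposed by truncated-Poisson column degrees, should suffice to close the gap.
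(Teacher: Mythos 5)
Your overall route is the same as the paper's: a first-moment bound $\E N=2^{n-m}$ for the unsatisfiable side, and for the satisfiable side a first moment over nonempty critical row sets (equivalently $\E{2^{m-\mathrm{rank}(A)}-1}$), computed in a configuration model with truncated-Poisson column degrees. The problem is that you stop exactly where the paper's work begins. The entire analytic core — producing an explicit exponential bound with free Chernoff parameters (Lemma~\ref{expbounds}, the function $H_k(\a,\zzz;c)$ of \eqref{H=}) and then proving it can be made negative, uniformly in $c$ on compact subsets of $(2/k,1)$ and over the whole range of $\a$ — is deferred to ``careful convexity and monotonicity analysis \dots should suffice.'' That is not a proof, and it is precisely the part that blocked previous attempts: the paper needs a two-variable optimization over $\zzz=(\z_1,\z_2)$, a monotonicity-in-$\la$ argument (\eqref{mon}) to reduce to the extreme case $c=1$, separate treatments of $\a$ near $0$, near $1/2$ (where the function genuinely tends to $0$ as $c\to1$), and near $1$, plus a Newton-type interval covering for $k=4,5,6$ (Claims~\ref{asmall}--\ref{ahalf}). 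Nothing in your plan indicates how negativity would actually be established, so the crux of Theorem~\ref{main} is missing.

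A second, more technical gap: your stated goal ``$\Phi_k(\a,c)<0$ for every $\a\in(0,1)$'' cannot be used as written, because the rate function tends to $0$ as $\a\to0$, so no uniform negative bound exists there, and an estimate of the form $\E{Z_\ell}=\exp((1+o(1))m\Phi_k)$ is useless when $m\Phi_k=O(1)$. To get the stated rate $O(m^{-(k-2)})$ you need a quantitative bound near $\a=0$, such as the paper's $H_k\le(c\a)(\tfrac k2-1)\ln(\a/\ak)$ in \eqref{lemma1.2}, together with control of the polynomial prefactors; your direct counts only cover constant $\ell$, leaving the regime $\ell\to\infty$, $\ell=o(m)$ uncovered by either argument. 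Finally, the transfer between the constrained $0$--$1$ model and the multiplicity-allowing configuration model is asserted as a $1+o(1)$ perturbation without justification; the paper proves an $O(1)$ comparison (Lemma~\ref{LA}, Corollary~\ref{preq}) via a Poisson limit for the number of multiply-occupied cells, and some such argument is genuinely needed, also to note that in the relaxed model size-$1$ critical sets are possible and must be excluded on the $\AAmn$ side rather than bounded.
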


We treat $k$ as fixed, and the constants implicit in the $O(\dot)$
notation may depend on $k$.
We are also able to treat the case
when the gap between $m$ and $n$ is not linear
but arbitrarily slowly growing,
obtaining the following stronger theorem.

\begin{theorem} \label{sharp}
Let $Ax=b$ be a uniformly random constrained
$k$-XORSAT instance with $m$ equations and $n$ variables,
with $k \geq 3$ and
\mnanyhyp. 
Then, for any $\omegan \to +\infty$,
if $m \leq n-\omegan$
then $Ax=b$ is a.a.s.\ satisfiable,
with satisfiability probability 
$1-O(m^{-(k-2)}+\exp(-\sharpconst \, \omegan))$,
while if $m \geq n+\omegan$
then $Ax=b$ is a.a.s.\ unsatisfiable,
with satisfiability probability $O(2^{-\omegan})$.
\end{theorem}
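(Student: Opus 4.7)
The plan is to treat the two directions separately.

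\emph{Unsatisfiable direction} ($m \ge n + \omegan$). For any fixed $A \in \bool^{m\by n}$, the system $Ax=b$ has a solution iff $b$ lies in the column space of $A$, a subspace of $\bool^m$ of size $2^{\mathrm{rank}(A)}$; averaging over the uniform $b$, the conditional satisfiability probability is exactly $2^{\mathrm{rank}(A)-m}$. Since $\mathrm{rank}(A) \le n$ deterministically, taking expectation over $A$ gives $\mathbb{P}[\text{sat}] \le 2^{n-m} \le 2^{-\omegan}$. This elementary bound uses nothing about the constrained distribution of $A$ or the hypothesis $\liminf m/n > 2/k$.

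\emph{Satisfiable direction} ($m \le n - \omegan$). Here the plan is to refine the first-moment bound on critical row sets underpinning Theorem~\ref{main}. Let $K := m - \mathrm{rank}(A)$ be the left nullity of $A$, so that the number of critical sets $S \subseteq [m]$ (those with $\sum_{i\in S} A_i = 0$ in $\gftwo$) equals $2^K$. Then
\[
\mathbb{P}[\text{unsat}] \;=\; \mathbb{E}\!\left[1-2^{-K}\right] \;\le\; \mathbb{P}[K\ge 1] \;\le\; \mathbb{E}[2^K-1] \;=\; \mathbb{E}[N],
\]
where $N$ counts nonempty critical sets. Decompose $\mathbb{E}[N] = \sum_{w \ge 2} \binom{m}{w} q_w$ with $q_w$ the probability that a prescribed $w$-set of rows sums to zero, and split the sum at some large constant $w_0$. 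The term $w=2$ counts identical pairs of rows and contributes $O(m^2/n^k) = O(m^{-(k-2)})$ (using $m \asymp n$); a short combinatorial argument handles $3 \le w \le w_0$ with the same order, yielding the first term of the bound.

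For the large range $w_0 < w \le m$, I would reuse the entropy-type bounds already developed for Theorem~\ref{main}. Those estimates give $\binom{m}{w} q_w \le \exp\{-n \Lambda(w/m;\,m/n) + o(n)\}$ for an explicit functional $\Lambda$ with $\inf_{w/m} \Lambda(\cdot\,;c) > 0$ whenever $c<1$. The new ingredient required here is a uniform quantitative expansion of this infimum at the critical ratio: a bound of the form $\inf_{w/m} \Lambda(\cdot\,;c) \ge (\log 2)(1-c) - o(1-c)$ as $c \uparrow 1$. Summing over $w$ then produces $\exp\bigl(-(\log 2)(n-m) + o(n-m)\bigr)$, and absorbing the sub-leading slack into the slightly smaller constant $\sharpconst < \log 2$ delivers the required $\exp(-\sharpconst\,\omegan)$ term.

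\emph{Main obstacle.} The substantive work lies in the uniform expansion of $\Lambda$ at $c=1$: Theorem~\ref{main} needs only $\inf_{w/m} \Lambda > 0$ for a fixed $c \in (2/k,1)$, whereas Theorem~\ref{sharp} demands the precise leading order of this gap as $c \uparrow 1$, uniformly in $w/m$. This amounts to locating the minimiser of $\Lambda(\cdot\,;c)$ in a neighbourhood of the critical point, verifying it is non-degenerate, and extracting the coefficient $\log 2$ from a Taylor expansion of the Dubois--Mandler style entropy functional. Stating the final bound with the marginally weaker exponent $\sharpconst = 0.69$ rather than $\log 2 \approx 0.693$ is precisely what allows the lower-order error terms to be absorbed without further effort.
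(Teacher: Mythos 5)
Your unsatisfiable direction is exactly the paper's (it is Remark~\ref{>1}), and the skeleton of your satisfiable direction --- first moment over nonempty critical sets, small-$w$ terms giving $O(m^{-(k-2)})$, and an expansion of the exponential rate near $c=1$ with leading constant $\ln 2$ --- is also the paper's route. The genuine gap is in your error control, and it is fatal as stated: you bound each term by $\exp\{-n\Lambda(w/m;c)+o(n)\}$, sum over up to $m$ values of $w$, and propose to absorb the slack by taking $\sharpconst<\ln 2$. That absorption only handles errors of size $o(\omegan)$; your slack is $o(n)$ per term plus polynomial-in-$n$ factors (the number of summands in $w$, the Stirling prefactors, and the sum over $\nu$ hidden inside the probability that a given row set is critical). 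Since $\omegan$ may grow arbitrarily slowly (say $\log\log n$), even a single factor of $\sqrt n$ swamps $\exp(-\sharpconst\,\omegan)$. The paper has to win exactly this prefactor battle: for $\a=\ell/m$ near $1/2$ it proves the quadratic bound \eqref{Hbeta}, $\HH\le (c-1)H(\akpt)-\betak(1-2\a)^2$, so that the sum over $\ell$ is a Gaussian sum contributing only $O(\sqrt n)$, and it then sharpens the Chernoff bound \eqref{abless} on $\alnu$ by a Cauchy-integral/Laplace-method argument \eqref{a3}, upgrading \eqref{YlesseH} to \eqref{EYcrit} with an extra factor $O(1/(\z_1\sqrt n))$ that exactly cancels that $O(\sqrt n)$, leaving $O(1)\exp(-H(\akpt)\,\omegan)$ with $H(\akpt)\ge\sharpconst$. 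Nothing in your outline produces this cancellation, and "locating and Taylor-expanding the minimiser" does not by itself supply it.

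A second concrete problem: "reuse the entropy-type bounds already developed for Theorem~\ref{main}" does not cover what Theorem~\ref{sharp} needs. Those bounds (Claim~\ref{alarge}) are proved for $k\ge4$ and for $c$ bounded away from $1$ --- the monotonicity argument there degenerates as $\la\to\lak$ --- whereas Theorem~\ref{sharp} includes $k=3$ and $c\to1$. The paper patches this with Remark~\ref{RemAway} (allowing $c=1$ for $\a$ bounded away from $1/2$, $k\ge4$) and, for $k=3$, a different choice of $\zzz$ in \eqref{barzs} verified by interval arithmetic; your proposal is silent on $k=3$. Finally, splitting at a constant $w_0$ leaves the range $w_0<w=o(m)$ where $\inf_{w/m}\Lambda(\cdot;c)$ is \emph{not} bounded away from zero; this part is fixable by the geometric-in-$\ell$ bound of \eqref{lemma1.2} and Corollary~\ref{dupCor}, but the prefactor issue above is the substantive missing idea.
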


Rather than using the second-moment method on the number of solutions,
as Dubois and Mandler do,
we use the critical-set approach of Kolchin.
Remark~\ref{equiv} shows that the two methods are equivalent,
but Kolchin's leads us to more tractable calculations,
specifically, to a maximization problem with a number of variables
that is fixed, independent of $k$.
Using Kolchin's approach, but in the constrained model,
we will establish that $\cstar_k \geq 1$.
In the constrained and unconstrained models, 
a simple argument shows that $\cstar_k \leq 1$
(again see Remark~\ref{>1}).
Thus, for the constrained model (unlike the constrained one), 
the two bounds coincide, establishing the threshold.

Dubois and Mandler extended the threshold for 
the constrained 3-XORSAT model to
that for the unconstrained model
by observing that, in an unconstrained instance,
any variable appearing in just one clause (or none), 
can be deleted along with that clause (if any),
to give an equivalent instance, and this process can be repeated.
The key observation is that 
a uniformly random \emph{unconstrained} instance
reduces to
a uniformly random \emph{constrained} instance
with a predictable edge density;
the threshold for the unconstrained model is the value for which
the corresponding constrained instance has density 1. 
The same approach works for any $k$, 
and we capitalize on existing analyses of the 2-core of a 
random $k$-uniform hypergraph to 
establish the unconstrained $k$-XORSAT threshold
in Theorem~\ref{unconstrained}.

\subsection*{Other related work}
Work on the rank of random matrices over finite fields is not as extensive 
as that on real random matrices, but nonetheless a survey is beyond our scope.
In addition to the work already described, we note that
the rank of matrices with independent random 0--1 entries 
was explored over a decade ago by
Bl\"omer, Karp and Welzl \cite{BlKaWe},
and Cooper \cite{Cooper2}, among others.

In 2003, the $k$-XORSAT phase transition was determined by 
M{\'e}zard, Ricci-Tersenghi, and Zecchina \cite{Mezard}, 
by the non-rigorous ``replica'' method of statistical physics
and also by a second-moment calculation, 
with the purpose of showing that the replica method is correct in this
instance. 
The calculational details were omitted from the paper,
and the authors acknowledge \cite{MezardPersonal} that 
they did not rigorously prove negativity of the function playing the role
of our $H_k$ (see \eqref{H=})
nor did they treat the polynomial terms in the sum corresponding to our
\eqref{YlesseH} (which are of concern for small and large values of $\ell$).
Concurrently with and independently from our work, 
the k-XORSAT phase transition was also analyzed by
Dietzfelbinger, Goerdt, Mitzenmacher, Montanari, Pagh, and Rink
as part of a study of cuckoo hashing~\cite{DGMMPR10,DGMMPR09}.
We remark that where we work with critical row sets (see Section~\ref{Prback}),
counting vectors $y$ for which $yA=0$,
both M{\'e}zard et al.\ and Dietzfelbinger et al.\ use
the second-moment method on the number of solutions,
counting vectors $x$ for which $Ax=0$.

Recently, Darling, Penrose, Wade and Zabell \cite{DPWZ}
have explored a random XORSAT
model replacing the constant $k$ with a distribution,
but the satisfiability threshold has not yet been determined for
this generalization.

To translate our result for the constrained model to the unconstrained one,
we exploit results on the core of a random hypergraph.
For usual graphs, the threshold for the appearance of an $r$-core
was first obtained by 
Pittel, Spencer, and Wormald \cite{PiSpWo}.
For $k$-uniform hypergraphs, the $r$-core thresholds were obtained
roughly concurrently by
Cooper \cite{Cooper1},
Kim \cite{Kim}, and
Molloy \cite{Molloy}.
Two aspects of Cooper's treatment are noteworthy.
First, he works with a degree-sequence hypergraph model;
taking Poisson-distributed degrees reproduces the results for a simple random
hypergraph.
Also, he observes
\cite[Section 5.2]{Cooper1} that the
point at which a random $k$-uniform hypergraph's core 
has a (typical) edges-to-vertices ratio of 1
is an upper bound on the satisfiability threshold of 
unconstrained $k$-XORSAT;
proving that this is the true threshold is the 
main subject of the present paper.

\subsection*{Outline}
The remainder of the paper is organized as follows.
Section~\ref{Prback} formalizes our introductory observations about 
the first- and second-moment methods, 
the number of solutions, and the number of critical sets.
Section~\ref{PrSpaces} shows that
for the constrained model,
instead of considering random 0--1 matrices $A$,
it is asymptotically equivalent to consider random nonnegative 
\emph{integer} matrices $A$
subject to the same constraints on row sums (equal to $k$) 
and column sums (at least $2$).
Section~\ref{MainResult},
using generating functions and Chernoff's method, obtains an exponential bound
for the expected number of critical sets of any given cardinality.
Section~\ref{ProveLemma}
uses this bound
to show that, for $\lim m/n \in (2/k, 1)$ and $k>3$, 
the expected number of nonempty critical sets is $O(m^{-(k-2)})$.
Hence, with high probability,
there is no such set,
$A$ is of full rank, and the instance is satisfiable.
We conclude that 1 
is a sharp threshold for satisfiability of $Ax=b$ in
the constrained case for all $k\ge 3$.

Section~\ref{sharper} builds on the earlier results
to treat the case $\lim m/n=1$ and prove Theorem~\ref{sharp}.
Section~\ref{Unconstrained}
derives the unconstrained $k$-XORSAT threshold from the constrained one,
using standard results on the 2-core of a random hypergraph.

\section{Proof background}\label{Prback}

Let $N$ be the number of solutions 
to the system of equations $Ax=b$.

\begin{remark} \label{>1}
For an arbitrarily distributed $A \in \bool^{m \by n}$, 
with $b$ independent and uniformly distributed over $\bool^n$,
$\ex[N] = 2^{n-m}$,
and the satisfiability threshold is at most 1.
\end{remark}

\begin{proof}
Given $A$, there are $2^m$ systems given by $(A,b)$,
and in all they have $2^n$ solutions
since any $x$ uniquely determines $b=Ax$.
So $\E{N \cond A}=2^{n-m}$, and $\E{N}=2^{n-m}$.
By the first-moment method, 
$
\pr(Ax=b \text{ is satisfiable}) = \pr(N>0) \leq \ex[N] = 2^{n-m} ,
$
which tends to 0 if $\lim m/n >1$.
\end{proof}

\begin{definition}
Given a matrix, a \emph{critical set} is 
a collection of rows whose sum is all-even 
(i.e., the sum is the $0$ vector in $\gftwo$).
\end{definition}
\noindent
Note that the
collection of critical sets is sandwiched between
the minimal linearly dependent sets of rows, and 
all linearly dependent sets of rows.
It is useful because the minimal sets are hard to characterize,
while the collection of all linearly dependent row sets is
too large (as it includes all sets containing any linearly dependent sets);
the critical sets are a happy medium.

Let $X$ be the number of nonempty critical row subsets of a matrix $A$.
Where the first-moment method establishes the probable absence of solutions,
their probable presence can be established in this setting either by 
the second-moment method on the number of solutions,
showing that $\E{N^2} / \E{N}^2 \to 1$,
or by the first moment method on the number of non-empty critical row
sets, showing that $\E X \to 0$.
We will use the second approach (Kolchin's).
The two approaches suggest different calculations,
but as the following remark shows, they are equivalent.

\begin{remark} \label{equiv}
Let a distribution on $A \in \bool^{m \by n}$ be given, 
and let $b$ be independent of $A$ and uniformly distributed over $\bool^n$.
Then $\ex[N^2] \big/ \ex[N]^2 = \ex[X] +1$.
\end{remark}

\begin{proof}
Consider any fixed $A$, having rank $r(A)$ 
over $\gftwo$.
By elementary linear algebra, 
for each of the $2^{r(A)}$ values of $b$ in $\{Ax \colon x\in \{0,1\}^n\}$,
$Ax=b$ has $2^{n-r(A)}$ solutions,
giving $2^{2n-2\,r(A)}$ ordered pairs of solutions in each such case.
For the remaining values of $b$ there are no solutions,
so in all there are $2^{2n-r(A)}$ ordered pairs of solutions.
Taking the expectation over $b$ uniformly distributed over its $2^m$
possibilities, 
$\ex[N^2 \cond A] = \ex[2^{2n-r(A)-m}]$,
thus
$\ex[N^2] = \ex[2^{2n-r(A)-m}]$.
Since $\E{N}=2^{n-m}$ (see Remark~\ref{>1}),
\begin{equation*}
 \ex[N^2] / \ex[N]^2
  = \ex[2^{2n-r(A)-m}] / (2^{n-m})^2
  = \ex[2^{m-r(A)}] 
  = \ex[2^{n(A^\trans)}] ,
\end{equation*}
where $n(A^\trans)$ denotes the nullity of the transpose of $A$.

On the other hand, 
a critical row set is precisely one given by an indicator vector
$y \in \bool^m$ for which $y^\trans A=0$.
For a given $A$
the number of critical sets is thus $2^{n(A^\trans)}$,
and the expected number of non-empty critical row subsets is 
$\ex[X]=\E{2^{n(A^\trans)}}-1$.
\end{proof}

In fact, if $m \leq n$ and $\ex[X] \to 0$, then
with high probability $N=2^{n-m}$
(not merely $N/2^{n-m} \to 1$ in probability
as given by the second-moment method).
This follows because $X=0$ implies $r(A)=m$, in which case
$N=2^{n-m}$ for every $b$.
Thus,
$\Pr{N = 2^{n-m}}
 \geq \Pr{X=0}
 = 1- \Pr{X>0}
 \geq 1-\E X
 \to 1.
$

\si

The work in Sections \ref{PrSpaces}--\ref{ProveLemma} 
is to count the critical row subsets.
We will show that indeed $\ex[X]\to 0$ for the constrained random model
with $k\ge4$ and \mnhyp.

\section{Probability spaces}  \label{PrSpaces}
This section will establish Corollary~\ref{preq},
showing that the uniform distribution over constrained $k$-XORSAT matrices
$A \in \AAmn$ (see below)
is for our purposes equivalent
to a model $C \in \CCmn$ 
allowing a variable to 
appear more than once within an equation.

Let $\AAmn$ denote the set of all
$m\times n$ matrices with 0--1 entries, such that all $m$ row
sums are $k$, and all $n$ column sums are at least 2.
For $\AAmn$ to
be nonempty it is necessary that $km\ge 2n$,
and we will assume that 
\mnhyp.

A matrix $A\in \AAmn$ may be interpreted
as an outcome of the following allocation scheme. We have an
$m\times n$
array of cells with $k$ \emph{indistinguishable} chips assigned to each of
the $m$ rows. For each row, the $k$ chips are put in $k$ distinct cells
(so there is at most one chip per cell), subject
to the constraint that each column gets at least two chips.

Let us consider an alternative model, with the same constraints but
where the chips in each row are \emph{distinguishable},
giving allocations $B \in \BBmn$.
Then each allocation in $\AAmn$ is obtained
from $(k!)^m$ allocations in $\BBmn$,
and the uniform distribution on $\AAmn$ is equivalent to that on $\BBmn$.

Let $\CCmn$ be a relaxed version of $\BBmn$, without the requirement
that each of the $mn$ cells gets at most one chip.
Let $B$ and $C$ be
distributed uniformly on $\BBmn$ and $\CCmn$, respectively.
Crucially, and obviously, $B$ is equal in distribution to $C$, conditioned on $C\in
\BBmn$.

\smallskip

To state a key lemma on $\card\AAmn$, $\card\BBmn$, and
$\card\CCmn$ we need some notation, much of which will recur throughout the paper.

Introduce
\begin{align} \label{fdef}
f(x) &= \sum_{j\ge 2}\frac{x^j}{j!}=e^x-1-x 
,&
\psi(x) &= \frac{xf'(x)}{f(x)} ,  
\end{align}
with $\psi(0)=2$ defined by continuity,
and the \emph{truncated} Poisson random variable $Z=Z(\la)$,
\begin{align}         \label{Po2}
\pr(Z(\la)=j)=\frac{\la^j/j!}{f(\la)},\quad j\ge 2.
\end{align}

Then, 
\begin{align} \label{EZ}
\E{Z(\la)} &= \sum_j \frac{j \cdot \la^j/j!}{f(\la)} 
  = \frac{\la f'(\la)}{f(\la)} = \psi(\la) ,
\end{align}
Also,
$\E{Z(\la) (Z(\la)-1)} = \la^2 f''(\la)/f(\la)$,
leading to 
\begin{align} \label{VarZ}
\Var[Z(\la)] &= \E{Z (Z-1)} + \E{Z} - (\E{Z})^2
 \notag \\&=  \la^2 f''(\la)/f(\la) + \psi(\la) - (\psi(\la))^2
 = \la \psi'(\la) .
\end{align}
(With $\psi=x f'(x)/f(x)$,
\eqref{EZ} and \eqref{VarZ} hold for $f$ and $Z$ 
defined by any series $a_j x^j$, not just $x^j/j!$,
assuming convergence.)

From \eqref{VarZ} it is immediate that $\psi'(\la)>0$ for any $\la>0$.
(See also a general formulation in
\cite[Chapter 4, problem 6, p.~77]{Thompson}.)
The next claim shows that $\psi$ is convex as well as increasing,
and establishes both facts for all $\la$
(though we only require them for positive $\la$).

\begin{claim}\label{psi''}
$\psi(x)$ is strictly increasing, and convex. 
\end{claim}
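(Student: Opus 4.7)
The plan is to prove both monotonicity and convexity by direct calculus, computing $\psi'$ and $\psi''$ and factoring each numerator into manifestly non-negative pieces. Throughout I use that $f''(x)=e^{x}>0$ makes $f$ strictly convex with minimum $f(0)=0$, so $f(x)>0$ for $x\neq 0$.

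For monotonicity, I would differentiate $\psi=xf'/f$ to obtain
\[
 \psi'(x) \;=\; \frac{ff'+xff''-x(f')^{2}}{f(x)^{2}} \;=\; \frac{(e^{x}-1)^{2}-x^{2}e^{x}}{f(x)^{2}} \;=\; \frac{e^{x}\bigl(2\cosh x-2-x^{2}\bigr)}{f(x)^{2}},
\]
the middle identity being direct algebra using $f(x)=e^{x}-1-x$. The bracketed factor equals $\sum_{j\geq 2} 2x^{2j}/(2j)!$, which is manifestly $\geq 0$ and vanishes only at $x=0$. Together with $f^{2}>0$ for $x\neq 0$ and $\psi'(0)=1/3$ by Taylor expansion, this gives $\psi'(x)>0$ on $\mathbb{R}$.

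For convexity, a second differentiation yields $\psi''(x)=H(x)/f(x)^{3}$ with
\[
 H(x) \;=\; (x^{2}-4x+2)e^{2x} + (x^{3}+x^{2}+4x-4)e^{x} + 2.
\]
I would reduce $H\geq 0$ to a triviality through a short tower of derivatives. A direct computation gives
\begin{align*}
 H'(x) &= xe^{x}\,Q(x), & Q(x) &:= 2(x-3)e^{x}+x^{2}+4x+6, \\
 Q'(x) &= 2R(x), & R(x) &:= (x-2)e^{x}+x+2, \\
 R'(x) &= (x-1)e^{x}+1, & R''(x) &= xe^{x}.
\end{align*}
Since $R''$ has the sign of $x$, $R'$ is minimized at $0$ with $R'(0)=0$, so $R'\geq 0$ and $R$ is nondecreasing; coupled with $R(0)=0$, this forces $R$ to have the sign of $x$. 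Cascading: $Q'=2R$ has the sign of $x$, so $Q$ attains its minimum at $0$ with $Q(0)=0$, hence $Q\geq 0$; then $H'(x)=xe^{x}Q(x)$ has the sign of $x$, so $H$ attains its minimum at $0$ with $H(0)=0$, giving $H\geq 0$ everywhere. Combined with $f^{3}>0$ on $\mathbb{R}\setminus\{0\}$ and $\psi''(0)=1/9$ (Taylor expansion), this yields $\psi''\geq 0$ on $\mathbb{R}$.

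The main obstacle is the convexity. The Taylor series of $H$ about $0$ has nonnegative coefficients with a zero of order six at the origin, so positivity of $H$ on $x\geq 0$ is immediate, but on $x<0$ the signs in the series alternate and that expansion is useless. The derivative-tower argument above sidesteps this by pushing positivity of $H$ all the way down to the triviality $\operatorname{sign}(xe^{x})=\operatorname{sign}(x)$.
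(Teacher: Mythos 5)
Your proof is correct, and its key step is genuinely different from the paper's. Both arguments reduce convexity to positivity of the same quantity: the numerator of $\psi''$, which the paper calls $g=2f^2-4xf'f-x^2f''f+2x^2(f')^2$ and which, when expanded in exponentials, is exactly your $H(x)=(x^2-4x+2)e^{2x}+(x^3+x^2+4x-4)e^x+2$. The paper proves $g>0$ by series-coefficient bookkeeping: it expands $g$ as a power series for $x>0$ and checks $g_j>0$ for $j\ge 6$, then substitutes $x=-y$, factors out $e^{-2y}$, and verifies positivity of a second family of coefficients $g'_j$ to handle $x<0$. Your derivative tower $H'\!=xe^xQ$, $Q'=2R$, $R'=(x-1)e^x+1$, $R''=xe^x$ replaces both coefficient computations with a cascade of sign arguments anchored at the single point $x=0$ (where $H,Q,R,R'$ all vanish), which handles positive and negative $x$ uniformly and is checkable by pure differentiation; the paper's route, by contrast, makes the vanishing to order six at the origin explicit and gives quantitative information about the coefficients. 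Your monotonicity argument also differs: you compute $\psi'$ directly and exhibit its numerator as $e^x\bigl(2\cosh x-2-x^2\bigr)=e^x\sum_{j\ge2}2x^{2j}/(2j)!$, whereas the paper deduces $\psi'>0$ from convexity together with $\lim_{x\to-\infty}\psi'(x)=0$ (and, for $\lambda>0$, from the variance identity $\Var[Z(\lambda)]=\lambda\psi'(\lambda)$); your version is self-contained and does not route strict monotonicity through convexity. One cosmetic remark: you conclude $\psi''\ge0$, which is all the Claim asserts, but your cascade in fact gives the paper's stronger statement $\psi''(x)>0$ for all $x$, since $R'$, hence $R$, $Q$, and $H$, vanish only at $x=0$.
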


\begin{proof}
We begin with convexity.
Differentiating \eqref{fdef} shows that 
$\psi'' = g/f^3$,
where
\begin{equation}
g=2f^2-4x f' f-x^2 f'' f+2x^2(f')^2 \label{gexplicit}
.
\end{equation}
Write $g(x) = \sum_{j\geq 0} g_j x^j$.
Expanding \eqref{gexplicit}
as a sum of terms $x^a e^{bx}=\sum_{j\ge 0}\tfrac{b^jx^{a+j}}{j!}$, 
and collecting like terms,
we find that $g_j=0$ for $j \leq 5$, while
for all $j \geq 6$,
$$
g_j=\frac{j-1}{j!}\bigl[2^{j-2}(j-8)+j^2-j+4\bigr]
 >0.
$$
Positivity is trivial for $j \geq 8$ and easily checked for $j=6$ and $7$.
This establishes that $g(x)>0$ for $x>0$.
Substituting $x=-y$ in $g(x)$,
writing $g(x) = e^{-2y} \sum_{j\ge 0}g'_j y^j$,
and using the same method yields
$g'_j=0$ for $j \leq 5$, while
for $j \geq 6$,
$g'_j = \frac1{j!} (2^{j+1}-j^3+4j^2-7j-4) >0$.
This establishes that $g(x)>0$ for $x<0$.
Finally, $\psi''(0)=1/9$. Therefore $\psi''(x)>0$ for all $x$.

That $\psi'(x)>0$ follows from 
$\lim_{x \to -\infty} \psi'(x)=0$ and $\psi''>0$.
\end{proof}

Under our assumption that $m/n>2/k$, 
the equation
$\psi(x) = km/n$
has a unique root, and it is positive.
This follows from the facts that
$\psi(x)$ is strictly increasing (see Claim~\ref{psi''}), 
 $\psi(0)=2$,
and $\psi(x) \to \infty$ as $x \to \infty$.
Henceforth, let 
\begin{align} \label{lambda}
\la &= \la(km/n)
 := \psiinv(km/n)
\end{align}
be this root. 
Since by Claim~\ref{psi''} $\psi$ is strictly increasing,
so is $\la=\psiinv$.

From \eqref{EZ} and \eqref{lambda},
\begin{align} \label{pgfE}
\E{Z(\la)} = \psi(\la) = \frac{km}{n}.
\end{align}
From Claim \ref{psi''}, for $\la>0$, $\psi'(\la)$
lies between $\psi'(0)=1/3$ and $\lim_{x \to \infty} \psi'(x)=1$,
and thus
\begin{align} \label{VarZ=}
\Var[Z(\la)]
&= \la \psi'(\la)
=\Theta(\la) .
\end{align}

\si

With these preliminaries done,
we focus on asymptotics of $\card\AAmn$, $\card\BBmn$ and $\card\CCmn$.

\begin{lemma}\label{LA}
Suppose \mnhypi. 
Then, with $\la$ as in \eqref{lambda}, 
\begin{align}
\card\CCmn=&\,\frac{1+O(n^{-1})}{\sqrt{2\pi n\Var[Z(\la)]}}\,(km)!\frac{f(\la)^n}{\la^{km}},
\label{C}\\
\frac{\card\BBmn}{\card\CCmn}=&\,\exp\left(-\frac{k-1}{2}\,\frac{\la e^{\la}}
{e^{\la}-1}\right)+o(1),\label{B}
\end{align}
so that the fraction $\card\BBmn/\card\CCmn$ is bounded away from zero.
Consequently
\begin{align}
\card\AAmn
= \frac{\card\BBmn}{(k!)^m}
=&\,\frac{1+o(1)}{\sqrt{2\pi n\Var[Z(\la)]}}\,\frac{(km)!}{(k!)^m}
\,\frac{f(\la)^n}{\la^{km}}
\exp\left(-\frac{k-1}{2}\,\frac{\la e^{\la}}{e^{\la}-1}\right).\label{A}
\end{align}
\end{lemma}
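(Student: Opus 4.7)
The plan is to establish \eqref{C} by a standard exponential-generating-function calculation combined with a local central limit theorem, then \eqref{B} by a Poisson approximation for within-row chip collisions, and finally \eqref{A} by combining these with the elementary identity $\card\AAmn=\card\BBmn/(k!)^m$ (noted just before the lemma).

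For \eqref{C}, I would observe that an element of $\CCmn$ is equivalent to a function $\sigma\colon[m]\times[k]\to[n]$ with each fiber $|\sigma^{-1}(j)|\ge 2$. Regarding the $km$ chips as an ordered labeled set and the columns as unordered ``boxes'' (but here distinguishable since column positions carry labels $1,\dots,n$), the EGF for one column is $f(x)$ and so
\[
\card\CCmn = (km)!\,[x^{km}]\,f(x)^n.
\]
To extract asymptotics I would use the paper's truncated Poisson $Z(\la)$: for iid copies $Z_1,\dots,Z_n\sim Z(\la)$ with $\la=\psiinv(km/n)$ as in \eqref{lambda}, the PGF identity $\E{x^{Z(\la)}}=f(\la x)/f(\la)$ gives
\[
[x^{km}]\,f(x)^n = \frac{f(\la)^n}{\la^{km}}\,\Pr{Z_1+\cdots+Z_n=km}.
\]
Since $\la$ was chosen so that $\E{Z_i}=km/n$ by \eqref{pgfE}, the sum has the correct mean, and since $Z(\la)$ is a lattice variable with $\Var Z(\la)=\Theta(\la)$ bounded below by \eqref{VarZ=} in the regime \mnhypi, the local CLT yields
\[
\Pr{\textstyle\sum_i Z_i=km}=\frac{1+O(n^{-1})}{\sqrt{2\pi n\Var Z(\la)}},
\]
which gives \eqref{C}.

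For \eqref{B}, the ratio $\card\BBmn/\card\CCmn$ is exactly the probability that a uniformly random $C\in\CCmn$ is collision-free in every row. Let
\[
X=\sum_{i=1}^{m}\sum_{1\le s<s'\le k}\mathbf{1}[\sigma(i,s)=\sigma(i,s')]
\]
count within-row colliding pairs of chips, so that $\card\BBmn/\card\CCmn=\Pr{X=0}$. Conditional on the column-sum vector $(Z_1,\dots,Z_n)$, a uniform $C\in\CCmn$ assigns chips to columns uniformly among all functions with those fiber sizes, so by exchangeability any fixed pair of distinct chips collides with probability $\sum_j\binom{Z_j}{2}/\binom{km}{2}$. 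Taking expectations, using that conditionally on $\sum Z_i=km$ the marginal of each $Z_j$ is asymptotically $Z(\la)$, and applying $\E{Z(Z-1)}=\la^2 f''(\la)/f(\la)=\la^2 e^\la/f(\la)$ together with $km/n=\psi(\la)=\la(e^\la-1)/f(\la)$, a short algebraic simplification gives
\[
\E X \longrightarrow \mu := \frac{k-1}{2}\cdot\frac{\la e^\la}{e^\la-1}.
\]
To upgrade $\E X\to\mu$ to $\Pr{X=0}\to e^{-\mu}$ I would apply Brun's sieve: show that the factorial moments satisfy $\E{X^{(r)}}\to\mu^r$ by exactly the same exchangeability/CLT reduction (the dominant contribution comes from $r$ vertex-disjoint colliding pairs; configurations with shared chips are of lower order), then conclude via inclusion--exclusion $\Pr{X=0}=\sum_r(-1)^r\E{X^{(r)}}/r!\to e^{-\mu}$. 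This yields \eqref{B}.

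Then \eqref{A} is immediate by multiplying \eqref{C} and \eqref{B} and dividing by $(k!)^m$. The main technical obstacle is the rigorous Poisson approximation in the second step: while $\E X\to\mu$ is a mechanical (if fiddly) calculation, controlling the higher factorial moments requires handling the mildly dependent distribution of $(Z_1,\dots,Z_n)$ conditioned on $\sum Z_i=km$ and ruling out clustered colliding configurations. The local CLT in the first step is routine once the variance lower bound from \eqref{VarZ=} is in hand.
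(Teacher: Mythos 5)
Your proposal is correct and follows essentially the same route as the paper: the EGF identity $\card\CCmn=(km)!\,[z^{km}]f(z)^n$ combined with the truncated-Poisson tilt and a local limit theorem for \eqref{C}, and convergence of all factorial moments of the within-row collision count to powers of $\gamma=\tfrac{k-1}{2}\,\tfrac{\la e^{\la}}{e^{\la}-1}$, hence a Poisson limit and $\pr(\MM=0)\to e^{-\gamma}$, for \eqref{B}. The only cosmetic difference is that you compute the collision moments by exchangeability conditional on the column-sum vector, whereas the paper extracts them directly as generating-function coefficients (with the same local-CLT input and the same disjoint-versus-clustered split for the higher factorial moments).
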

\bi

\begin{corollary} \label{preq}
Under the hypotheses of Lemma~\ref{LA},
uniformly for all non-negative, matrix-dependent functions~$r$,
\begin{equation*} 
\ex[r(A)]=
\ex[r(B)]=O(\ex[r(C)]).
\end{equation*}
\end{corollary}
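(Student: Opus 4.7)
The plan is to reduce the corollary to two very short observations about the three probability spaces, using Lemma~\ref{LA} only through the single quantitative fact that $\card\BBmn/\card\CCmn$ is bounded below by a positive constant.

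First I would note that the map $\pi:\BBmn\to\AAmn$ which forgets the labels on the $k$ chips in each row is exactly $(k!)^m$-to-one, so the uniform measure on $\BBmn$ pushes forward to the uniform measure on $\AAmn$. Since $r$ is a function of the underlying 0--1 matrix only (i.e., $r(B)=r(\pi(B))$), this immediately gives
\begin{equation*}
\ex[r(B)]=\frac{1}{\card\BBmn}\sum_{B\in\BBmn}r(\pi(B))
=\frac{1}{\card\AAmn}\sum_{A\in\AAmn}r(A)=\ex[r(A)].
\end{equation*}

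Next I would compare $B$ and $C$. By construction, $\BBmn\subseteq\CCmn$ and a uniform $C\in\CCmn$ conditioned on $C\in\BBmn$ is uniform on $\BBmn$; equivalently, $B$ is distributed as $(C\mid C\in\BBmn)$. For any non-negative $r$,
\begin{equation*}
\ex[r(B)]=\ex[r(C)\mid C\in\BBmn]
=\frac{\ex[r(C)\,\one_{C\in\BBmn}]}{\pr(C\in\BBmn)}
\le\frac{\ex[r(C)]}{\pr(C\in\BBmn)}.
\end{equation*}
Now $\pr(C\in\BBmn)=\card\BBmn/\card\CCmn$, which by \eqref{B} is bounded away from $0$ uniformly in $m,n$ under the stated hypotheses. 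Hence $\ex[r(B)]=O(\ex[r(C)])$ with an implicit constant depending only on $k$ (through the limiting value of the ratio). Combining this with the previous identity yields the corollary.

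There is essentially no hard step here; the one thing to be careful about is the interpretation of the matrix-valued function $r$ on the three spaces. Elements of $\AAmn$ are 0--1 matrices, elements of $\BBmn$ carry additional chip labels within each row, and elements of $\CCmn$ are nonnegative integer matrices (with chip labels) in which an entry may exceed $1$. The non-negativity of $r$ is used only to justify the inequality $\ex[r(C)\,\one_{C\in\BBmn}]\le\ex[r(C)]$; everything else is just a change-of-measure identity. The corollary is therefore a direct consequence of \eqref{B}, which is the substantive content of Lemma~\ref{LA} needed here.
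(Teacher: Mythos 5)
Your argument is correct and is essentially the paper's own proof: the first equality comes from the $(k!)^m$-to-one relabelling (the paper just calls it trivial), and the second is the same change-of-measure bound $\ex[r(B)]=\ex[r(C)\mid C\in\BBmn]\le \ex[r(C)]\cdot\card\CCmn/\card\BBmn$, with \eqref{B} supplying that the ratio is $O(1)$; the paper merely phrases it for events $\event\subseteq\BBmn$ rather than directly for expectations. No issues.
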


\begin{proof}
The first equality is trivial.
To show the second,
for any $\event \subseteq \BBmn$,
\begin{align}   \label{P(B)=O(P(C))}
\pr(B\in \event )
 & =\pr(C\in \event \cond C\in \BBmn)
 \notag \\ &= \frac{\pr(C\in \event ,\,C\in \BBmn)}
{\card\BBmn \,/\,\card\CCmn }
\leq\,  \frac{\card\CCmn}{\card\BBmn} \,  {\pr(C\in\event )}
=O(1) \, \pr(C\in\event )
\end{align}
by \eqref{B}.
\end{proof}

\begin{proof}[Proof of Lemma~\ref{LA}]
Equation \eqref{A} is immediate from \eqref{C} and \eqref{B}.
Proving \eqref{C} and \eqref{B}
will occupy the rest of this section.

\textbf{We first prove \eqref{C}.} To determine $\card\CCmn$, recall that each row
$i \in m$ is
given its own $k$, mutually distinguishable, chips.
We can get an allocation $C \in \CCmn$ by permuting all the chips
and allocating the first $j_1 \geq 2$ chips to column~1,
the next $j_2 \geq 2$ chips to column~2, \textit{etc.};
each chip goes to its predetermined row and its random column.
Up to the irrelevant permutation of chips within the first $j_1$,
the next $j_2$, \textit{etc.},
an allocation $C \in \CCmn$ is uniquely determined by such a scheme.

Observe that the probability generating function (p.g.f.) of 
the truncated Poisson random variable $Z(\la)$ defined in \eqref{Po2} is 
\begin{align} \label{pgf}
\ex\bigl[z^{Z(\la)}\bigr]=\frac{f(z\la)}{f(\la)} .
\end{align}
Following the notational convention that for
$h(z)=\sum_jh_jz^j$, 
$ [z^j]\,h(z) := h_j $,
we have
\begin{align} 
\card\CCmn=&
 \,\sum_{j_1+\cdots+j_n=km\atop j_1,\dots,j_n\ge 2}
    \frac{(km)!}{j_1!\cdots j_n!}
 \notag \\
=&\,(km)!\,[z^{km}]\left(\sum_{j\ge 2}\frac{z^j}{j!}\right)^n
 =(km)!\,[z^{km}]f(z)^n
 \label{14.1}
 \\
=&\,(km)!\frac{f(\la)^n}{\la^{km}}\,[z^{km}]\left(\frac{f(z\la)}{f(\la)}\right)^n
 \notag \\
=&(km)!\frac{f(\la)^n}{\la^{km}}\,[z^{km}]\bigl(\ex[z^{Z(\la)}]\bigr)^n
 \quad \text{(see \eqref{pgf})}
 \notag \\
=&\,(km)!\frac{f(\la)^n}{\la^{km}}\,\pr\left(\sum_{j=1}^nZ_j(\la)=km\right),
 \label{|Cmn|}
\end{align}
where $Z_1(\la),\dots,Z_n(\la)$ are independent copies of $Z(\la)$.
Now, since
$\Var[Z(\la)]=\Theta(\la)$ 
(by \eqref{VarZ=}) and $\liminf\la>0$ (by $\la=\la(km/n)$ and the hypothesis
that $\lim m/n > 2/k$), 
we have
$\liminf\Var[Z(\la)]>0$.
So, by a local limit theorem
(Aronson, Frieze and Pittel~\cite[equation (5)]{ArFrPi}), 
$$
\pr\left(\sum_{j=1}^nZ_j(\la)=km\right)=\pr\left(\sum_{j=1}^nZ_j(\la)=n\ex[Z(\la)]\right)=
\frac{1+O(n^{-1})}{\sqrt{2\pi n\mathrm{Var}[Z(\la)]}},
$$
which proves \eqref{C}.

\textbf{We now prove \eqref{B}.} Let $C=\{c_{i,j}\}$ be distributed uniformly on $\CCmn$. Let $M$ denote
the number of cells that house $2$ or more chips, i.e.,
$
M=\bigl|\{(i,j) \colon c_{i,j}\ge 2\}\bigr|.
$
Let
$\MM$ be the number of pairs of chips hosted by the same cell, i.e.,
$$
\MM=\sum_{(i,j) \colon c_{i,j}\ge 2}\binom{c_{i,j}}{2}=\sum_{(i,j)}\binom{c_{i,j}}{2}.
$$
$\MM=M$ iff there are no cells hosting more than $2$ chips.
Clearly
$$
\frac{\card\BBmn}{\card\CCmn}=\pr(C\in\BBmn)=\pr(\MM=0).
$$
Of course, $\pr(\MM=0)=\pr(M=0)$, but, unlike $M$, $\MM$ is amenable to moment
calculations.
\si

Denoting the indicator of an event $E$ by $\one(E)$, we write
\begin{align}
\MM &=\sum_{i\in [m],\,j\in [n]}\sum_{1\le u<v\le k}\one\bigl(E(i,j;u,v)\bigr),
 \label{MM}
\end{align}
where $E(i,j;u,v)$ is the event that, of the $k$ chips owned by row $i$, at
least the two chips
$u$ and $v$ were put into cell $(i,j)$. Each of these $mn\binom{k}{2}$ event
indicators has the same expected value,
\begin{equation}\label{Exp}
\ex[\one(E(i,j;u,v))] =
(km-2)!\,\frac{[x^{km-2}]f(x)^{n-1}e^x}{\card\CCmn} .
\end{equation}
To see why \eqref{Exp} is so, compare with \eqref{14.1}
and note that once we have put two selected chips into a cell $(i,j)$
we allocate the remaining $(km-2)$ chips
amongst $n$ columns, at least two per column,
with the exception (hence the sole $e^x$ factor)
that the $j$th column receives an unconstrained
number of additional chips (as it already has two).
Arguing as for \eqref{|Cmn|},
\begin{align}\label{Arg}
[x^{km-2}]f(x)^{n-1}e^x
 &=
 \frac{f(\la)^{n-1}e^{\la}}{\la^{km-2}} \; 
  \Pr{\sum_{j=1}^{n-1}Z_j(\la)+ \Pola=km-2} ,
\end{align}
where $\Pola$ stands for an independent, 
usual (not truncated) Poisson$(\la)$ random variable. 
This last probability equals
\begin{align*}
 \sum_r \; \Big[
  \Pr{ \Po(\l)=r } \: \cdot \:
  \pr \Big( \sum_{j=1}^{n-1} Z_j(\l)= km-2-r \Big)
 \Big] .
\end{align*}
By the local limit theorem for $\sum_{j=1}^{n-1} Z_j(\l)$,
for $r \leq \ln n$
the second probability in the $r$th term of the sum is again
asymptotic to $(2\pi n \Var[Z(\l)])^{-1/2}$.
Then so is the probability in \eqref{Arg},
since $\Pr{\Pola > \ln n} = O(n^{-K})$, for every $K>0$.
From this, \eqref{MM}, \eqref{Exp}, \eqref{Arg}, and \eqref{C}, 
$$
\ex[\MM] 
 =(1+o(1))\frac{mn\binom{k}{2}}{(km)_2}\,\frac{\la^2e^{\la}}{f(\la)}
$$
with the usual falling-factorial notation
$(a)_b:=a(a-1)\cdots(a-b+1)$.  
Recalling \eqref{lambda} 
and setting
\begin{equation} \label{gammaDef}
\gamma := \frac{k-1}{2}\,\frac{\la e^{\la}}{e^{\la}-1}
\end{equation}
gives
$$
\ex[\MM] 
 =\gamma+o(1).
$$

More generally, we now show that for every fixed $t\ge 1$ we have
\begin{equation}\label{ENt}
 \ex[(\MM)_t] = \gamma^t +o(1).
\end{equation}
Let $\tup$ be the set of all 4-tuples $(i,j,u,v)$ as before,
with $i \in [m]$, $j \in [n]$, and $1 \leq u < v \leq k$.
Now let $\tupt$ denote the collection of $t$-tuples
of such 4-tuples with all the 4-tuples distinct.
Where $\set{(i_\t,j_\t,u_\t,v_\t)}_{\t=1}^t \in \tupt$,
in a slight abuse of notation we will write $(\ii,\jj,\uu,\vv) \in \tupt$,
where
$\ii=(i_1,\dots,i_t)$, $\jj=(j_1,\dots,j_t)$,
$\uu=(u_1,\dots u_t)$, $\vv=(v_1,\dots,v_t)$.
Then we have
\begin{align*}
(\MM)_t &=
\sum_{(\ii,\jj,\uu,\vv) \in \tupt}
\one\left(\bigcap_{s=1}^t E(i_t,j_t;u_t,v_t)\right) ,
\intertext{hence}
\ex[(\MM)_t] &=
\sum_{(\ii,\jj,\uu,\vv) \in \tupt}
\pr\left(\bigcap_{s=1}^t E(i_t,j_t;u_t,v_t)\right).
\end{align*}
We break the sum into two parts, $\Sigma_1$ and the remainder $\Sigma_2$, where
$\Sigma_1$ is the restriction to
$\ii$ and $\jj$ each having all its components distinct. In
$\Sigma_1$ the number of summands is $(m)_t(n)_t\binom{k}{2}^t$, and each
summand is
$$
(km-2t)!\,\frac{[x^{km-2t}]\,f(x)^{n-t}(e^x)^t}{\card\CCmn};
$$
see the explanation following \eqref{Exp}. Analogously to \eqref{Arg},
\begin{equation*}
 [x^{km-2t}]f(x)^{n-t}(e^x)^t
 =\frac{f(\la)^{n-t}(e^{\la})^t}{\la^{km-2t}} 
  \, \Pr{ \; \sum_{j=1}^{n-t}Z_j(\la)+\sum_{s=1}^t \Posla = km-2t},
\end{equation*}
where the $n$ truncated and ordinary Poisson random variables
$Z_j(\la)$ and $\Posla$ are mutually independent.
Since $t$ is fixed, the probability remains
asymptotic to $\bigl(2\pi n\Var[Z(\la)]\bigr)^{-1/2}$. So, using \eqref{C} and
recalling \eqref{gammaDef}, we have
\begin{align}
\Sigma_1\sim&\,\frac{(m)_t(n)_t\binom{k}{2}^t}{(km)_{2t}}\,\left(\frac{\la^2e^{\la}}{f(\la)}
\right)^t\notag\\
\sim&\,\left[\frac{mn\binom{k}{2}}{(km)^2}\,\frac{\la^2e^{\la}}{f(\la)}\right]^t\to\gamma^t.
\label{Sig1}
\end{align}
In the case of $\Sigma_2$, 
letting $I=\{i_1,\dots,i_t\}$, $J=\{j_1,\dots,j_t\}$, we have
$|I|+|J|\le 2t-1$. So the number of attendant pairs $(I,J)$ is at most $(m+n)^{2t-1}
=O\bigl(m^{2t-1}\bigr)$. 
The number of pairs $(\ii,\jj)$ inducing a given pair $(I,J)$ is bounded above by a constant $s(t)$. 
For every one of those $s(t)$ choices, we select pairs of chips for each of
the chosen $t$ cells; there are at most $\binom{k}{2}^t$ ways of doing so.
Lastly, we allocate the remaining $(km-2t)$ chips in such a way that every
column $j\in [n]\setminus J$ gets at least $2$ chips. 
As in the case of $\Sigma_1$, this can be done in
\begin{multline*}
(km-2t)!\,[x^{km-2t}]\,f(x)^{n-|J|}(e^x)^{|J|}\\
=
(km-2t)!\,
\frac{f(\la)^{n-|J|}(e^{\la})^{|J|}}{\la^{km-2t}}\,\,\pr\!\!\left(\sum_{j=1}^{n-|J|}Z_j(\la)+
\sum_{s=1}^{|J|}\Po_s(\la)=km-2t\right)
\end{multline*}
ways. 
Again, the probability is asymptotic to $\bigl(2\pi n\var[Z(\la)]\bigr)^{-1/2}$. So,
as $e^{\la}>f(\la)$, the sum $\Sigma_2$ is of order
\begin{equation}\label{Sig2}
m^{2t-1}\,\frac{(km-2t)!}{(km)!}\left(\frac{e^{\la}\la^2}{f(\la)}\right)^{t}=O(m^{2t-1}/m^{2t})=O(m^{-1}).
\end{equation}
Combining \eqref{Sig1} and \eqref{Sig2}, and recalling \eqref{gammaDef}, we
conclude that for each fixed $t\ge 1$,
$$
\ex[(\MM)_t] = \gamma^t+o(1).
$$
Therefore $\MM$ is asymptotic, with all its moments and in distribution, to $\Po(\gamma)$.
In particular,
$$
\pr(\MM=0) =\pr(\Po(\gamma)=0)+o(1)=e^{-\gamma}+o(1).
$$
This completes the proof of Lemma~\ref{LA}.
\end{proof}
\bi

\section{Counting critical row subsets, and the main result}
\label{MainResult}

This section will prove Theorem~\ref{main}.
Remark~\ref{>1} already dealt with the case $\lim m/n>1$.
It suffices, then, to show that with $\lim m/n \in (2/k,1)$,
the expected number of nonempty critical row sets goes to 0:
then with high probability there is no such set,
$A$ is of full rank, and the instance is satisfiable.

In the model $\CCmn$,
Lemma~\ref{expbounds} gives an upper bound on
the expected number of critical row sets of each cardinality 
$\ell \in \set{1,\ldots,m}$ 
as a function of $c=m/n$, $k$, $n$, and $\ell$, 
minimized over two additional variables $\z_1$ and $\z_2$.
Lemma~\ref{Hlemma} shows that, for $c \in (2/k,1)$,
there exist values for $\z_1$ and $\z_2$ 
making this bound small,
in particular making its exponential dependence on $n$
decreasing rather than increasing.
Corollary~\ref{dupCor} uses Lemma~\ref{Hlemma} to show that
in the model $\AAmn$
the total expected number of nonempty critical row sets is of order
$O\bigl(m^{-(k-2)}\bigr)$,
proving Theorem~\ref{main}.

Lemma~\ref{Hlemma} is established by several claims deferred to 
Section~\ref{ProveLemma},
and Section~\ref{Unconstrained} extends Theorem~\ref{main} to the 
unconstrained $k$-XORSAT model (Theorem~\ref{unconstrained}).


\begin{lemma}\label{expbounds} 
Suppose $k\ge 3$ and \mnhypi,
and let $C$ be chosen uniformly at random 
from $\CCmn$.
For $\ell \in \set{1,\ldots,m}$,
let $\Ymnl$ denote the number of 
critical row sets of $C$ of cardinality~$\ell$.
Then, with $c=m/n$,
$\a=\ell/m$,
$\ab=1-\a$,
$\l=\l(ck)$ as given by \eqref{lambda},
and $\zzz=(\z_1,\z_2)>\zero$,
\begin{equation}\label{YlesseH}
\ex\bigl[\Ymnl\bigr]
  \leb \,
 \sqrt{\tfrac{1}{\z_2}}\,
 \exp\bigl[n H_k(\a,\zzz;c)\bigr],
\quad\forall\,\zzz>0,
\end{equation}
where
\begin{multline}\label{H=}
H_k(\a,\zzz;c)=c H(\a)
+ck\a \ln(\a/\z_1) +ck\ab \ln(\ab/\z_2)
\\* 
 +\ln\frac{f(\l\tcdot(\z_2+\z_1))+f(\l\tcdot(\z_2-\z_1))}{2f(\la)} ,
\end{multline}
by continuity we define $x \ln x=0$ at $x=0$,
and $H(\a)$ is the usual entropy function
$$
H(\a):=
-\a\ln\a-(1-\a)\ln({1-\a}) .
$$
\end{lemma}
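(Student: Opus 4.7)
The plan is to count critical row subsets of cardinality $\ell$ in the model $\CCmn$ via exponential generating functions, extract the relevant coefficient by a Chernoff-type bound, and then compare with $|\CCmn|$ from Lemma~\ref{LA}.

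First, by row-symmetry,
$$\ex[\Ymnl] \cdot |\CCmn| = \binom{m}{\ell} N_\ell,$$
where $N_\ell$ counts the configurations $C \in \CCmn$ in which rows $1, \ldots, \ell$ contribute an even number of chips to every column. For a single column, letting $a$ be the number of chips arriving from the selected $\ell$ rows and $b$ the number from the remaining $m-\ell$ rows, we require $a$ even, $a, b \geq 0$, and $a+b \geq 2$. Arguing as for~\eqref{14.1} but with a bivariate exponential generating function,
$$N_\ell = (k\ell)! \, (k(m-\ell))! \, [x^{k\ell} y^{k(m-\ell)}] \, g(x,y)^n ,$$
with $g(x,y) = \cosh(x) e^y - 1 - y = \tfrac{1}{2}\bigl(f(x+y) + f(y-x)\bigr)$.

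Since $g$ has non-negative Taylor coefficients, the Chernoff bound yields
$$[x^{k\ell} y^{k(m-\ell)}] g(x,y)^n \leq \frac{g(u,v)^n}{u^{k\ell} v^{k(m-\ell)}} \qquad (u,v > 0),$$
and the substitution $u = \la\z_1$, $v = \la\z_2$ gives $g(u,v) = \tfrac{1}{2}\bigl(f(\la(\z_2+\z_1)) + f(\la(\z_2-\z_1))\bigr)$, matching the argument of the logarithm in~\eqref{H=}. Combining with $|\CCmn|$ from Lemma~\ref{LA} and applying Stirling to $\binom{m}{\ell}$, $(k\ell)!$, $(k(m-\ell))!$, and $(km)!$, the exponential content collapses cleanly to $\exp[n H_k(\a,\zzz;c)]$: Stirling on $\binom{m}{\ell}$ produces the $cH(\a)$ term; the factorial ratio $(k\ell)!(k(m-\ell))!/(km)!$ together with the Chernoff denominator $u^{k\ell}v^{k(m-\ell)} = \la^{km}\z_1^{k\ell}\z_2^{k(m-\ell)}$ gives the $ck\a\ln(\a/\z_1) + ck\ab\ln(\ab/\z_2)$ terms; and the $f(\la)^n$ factor from $|\CCmn|^{-1}$ supplies the $-\ln(2f(\la))$ needed to close the last term.

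The main obstacle is the polynomial prefactor. A tally shows that the Stirling square-root contributions from $\binom{m}{\ell}$, $(k\ell)!$, $(k(m-\ell))!$, $(km)!$ combine to $\Theta(1)$, so the 2D Chernoff together with the $\sqrt{n \Var[Z(\la)]}$ factor in $|\CCmn|^{-1}$ yields only an $O(\sqrt n)$ prefactor---too weak by a factor of $\sqrt{n\z_2}$. To recover this, I would sharpen the $y$-coefficient extraction: represent $[y^{k(m-\ell)}]$ as a contour integral and apply a uniform local central limit theorem to the tilted distribution on $\ZZ_{\geq 0}^2$ proportional to $u^a v^b c_{a,b}/g(u,v)$, where $c_{a,b}$ denote the Taylor coefficients of $g$. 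This saves a factor of order $1/\sqrt{n \Var Y}$, with $\Var Y = \Theta(\la\z_2)$ the variance of the $y$-marginal. Since $\Var[Z(\la)] = \Theta(\la)$ by~\eqref{VarZ=} and $\la$ is bounded below under $\liminf m/n > 2/k$, the $\sqrt{n\la}$ from $|\CCmn|^{-1}$ and the $1/\sqrt{n\la\z_2}$ gain combine to the claimed $O(\sqrt{1/\z_2})$. Extra care will be needed when $\ell$ is close to $0$ or to $m$, where the Stirling approximations degrade; in those regimes $N_\ell$ can be bounded directly and absorbed into the $O(1)$ constant.
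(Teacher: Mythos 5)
Your proposal is correct and is essentially the paper's own proof: the bivariate generating function $\cosh(x)e^y-1-y=\tfrac12\bigl(f(x+y)+f(y-x)\bigr)$ is exactly what the paper works with after decomposing over the number $\nu$ of columns receiving a positive even contribution from the chosen rows and resumming via the binomial theorem, and your split into a plain Chernoff bound in $x$ plus a sharpened $y$-coefficient extraction mirrors \eqref{abless} and \eqref{blessbetter}, with the same Stirling and $\card\CCmn$ bookkeeping. The ``uniform local CLT'' step you defer is precisely the paper's Cauchy-integral bound combined with the modulus inequality \eqref{PittelIneq}, which delivers the $O(1/\sqrt{n z_2})$ saving uniformly in $z_2>0$, so your prefactor accounting matches the claimed $O(\sqrt{1/\z_2})$.
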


\begin{proof}
By symmetry,
\begin{equation}\label{Ymnl}
\ex[\Ymnl]=\binom{m}{\ell}\pr(\DD_{\ell});\quad
\DD_{\ell}:=\bigcap_{j=1}^{n}\left\{\sum_{i=1}^{\ell}c_{i,j}
\text{ is even}\right\}.
\end{equation}
By symmetry again,
\begin{align}\label{nuth}
\pr(\DD_{\ell})
=&\,\sum_{\nu=1}^n\binom{n}{\nu} \pr(\DD_{\ell,\nu}), 
\end{align}
where
\begin{align}
 \DD_{\ell,\nu}:=&\,
\bigcap_{j=1}^{\nu}\left\{\sum_{i=1}^{\ell}c_{i,j}
\text{ is even, positive}\right\}\bigcap\bigcap_{j=\nu+1}^n\left\{\sum_{i=1}^{\ell}c_{i,j}=0\right\}.
\end{align}
Recalling that $\sum_{i\in [m]}c_{i,j}\ge 2$, 
we see that on the event $\DD_{\ell,\nu}$,
\begin{equation}\label{sepcon}
\sum_{i\le\ell}c_{i,j}=\left\{\begin{alignedat}2
&\text{even }>0,\quad&&j\le\nu,\\
&0,\quad&&j>\nu;\end{alignedat}\right.\qquad
\sum_{i>\ell}c_{i,j}\geq\left\{\begin{alignedat}2
&0,\quad&&j\le\nu,\\
&2,\quad&&j>\nu.\end{alignedat}\right.
\end{equation}
Thus on $\DD_{\ell,\nu}$ the column sums of the two complementary
submatrices, $\{c_{i,j}\}_{i\leq \ell,j\in [n]}$ and $\{c_{i,j}\}_{i>\ell,j\in [n]}$, are subject
to independent constraints.

Let $\CCmn(\ell,\nu)$ denote the set of all matrices $C$ with
row sums $k$ which meet the constraints \eqref{sepcon}. 
Then $\pr(\DD_{\ell,\nu})$ is given by
\begin{equation}\label{plnuless}
 \plnu :=\pr(\DD_{\ell,\nu})=\frac{|\CCmn(\ell,\nu)|}{\card\CCmn}.
\end{equation}
By the independence of constraints on column sums for the upper and the lower
submatrices of the matrices $C$ in question,
\begin{equation} \label{c=ab}
|\CCmn(\ell,\nu)|=\alnu \cdot \bmnu,
\end{equation}
where (paralleling our definition of $\CCmn$ in Section~\ref{PrSpaces})
$\alnu $ is the number of ways to assign
$k\ell$ chips among the first $\nu$ columns so that each of those columns gets a positive even number of chips,
and 
$\bmnu$ is the number of ways to assign $k(m-\ell)$ chips among
all $n$ columns so that each of the last $(n-\nu)$ columns gets at least $2$ chips.

As in \eqref{|Cmn|},
\begin{align}
\alnu 
&=
\sum_{j_1+\cdots+j_{\nu}=k\ell\atop j_s>0,\text{ even}}
\frac{(k\ell)!}{j_1!\cdots j_{\nu}!}
\notag \\ &= 
(k\ell)!\,[z^{k\ell}]\left(\sum_{j>0,\text{ even}}\frac{z^j}{j!}\right)^{\nu}
\notag \\&
=(k\ell)!\,[z^{k\ell}](\cosh z -1)^{\nu},  \label{aseries}
\end{align}
and
\begin{align}
\bmnu=&\,\sum_{j_1+\cdots+j_n=k(m-\ell)\atop
j_1,\dots,j_{\nu}\ge 0;\,\, j_{\nu+1},\dots,j_n\ge 2}
\frac{(k(m-\ell))!}{j_1!\cdots j_n!}
\notag \\ =&
(k(m-\ell))!\,[z^{k(m-\ell)}] (e^z)^{\nu} f(z)^{n-\nu}.   \label{bseries}
\end{align}
Since the coefficients of the Taylor expansion around $z=0$ of 
$e^{z\nu}f(z)^{n-\nu}$
are non-negative, we use these identities in a standard (Chernoff) way to bound
\begin{equation}\label{abless}
\begin{aligned}
\alnu \le&\,(k\ell)!\,\frac{(\cosh z_1 -1)^{\nu}}{z_1^{k\ell}},\quad
\forall\, z_1>0 .
\end{aligned}
\end{equation}
We could control $\bmnu$ similarly, but
we need a stronger bound, namely
\begin{equation}\label{blessbetter}
\bmnu\leb (nz_2)^{-1/2}(k(m-\ell))!\,\frac{(e^{z_2})^{\nu}f(z_2)^{n-\nu}}{z_2^{k(m-\ell)}},
\quad\forall\,z_2>0.
\end{equation}
The bound \eqref{blessbetter}
follows from three components:
the Cauchy integral formula
$$
\bmnu
 =
 \frac{(k(m-\ell))!}{2\pi}\!\!
 \oint\limits_{{\mbox{\Large${z=z_2e^{i\theta} \colon 
     \atop\theta\in (-\pi,\pi]}$}}\!\!}
 \frac {(e^{z})^{\nu}f(z)^{n-\nu}} {z^{k(m-\ell) +1 }} \, dz,
$$
and (with $z=z_2 e^{i\theta}$) 
the identity
$|e^z|=e^{z_2}\exp\bigl[-z_2(1-\cos\theta)\bigr]$ 
and the less obvious inequality
\begin{align} 
 |f(z)| & \le |f(z_2)|\exp\bigl[-z_2(1-\cos\theta)/3\bigr].
 \label{PittelIneq}
\end{align} 
(See Pittel~\cite[Appendix]{Pittel86} for the inequality,
and Aronson, Frieze and Pittel~\cite[inequality (A2)]{ArFrPi} 
for how it works in combination with the Cauchy formula.)

Using 
\eqref{plnuless}, 
\eqref{c=ab}, \eqref{abless}, \eqref{blessbetter},
with $\card\CCmn$ from
\eqref{C} 
and $\Var{Z(\la)}$ from \eqref{VarZ=}, we obtain
that, $\forall\,z_1,z_2>0$,
\begin{equation}
 \plnu \leb \sqrt{\frac{\la}{z_2}}\,\binom{km}{k\ell}^{-1}\frac{\la^{km}}{z_1^{k\ell}\,z_2^{k(m-\ell)}}\,\,
\frac{[e^{z_2}(\cosh z_1-1)]^{\nu}f(z_2)^{n-\nu}}{f(\la)^n} .
\label{peq}
\end{equation}

Now, it is immediate from
\eqref{Ymnl}, \eqref{nuth}, and \eqref{plnuless} that
\begin{align}
\ex\bigl[\Ymnl\bigr]
 = \binom m \ell \sum_{\nu=1}^n \binom n \nu  \plnu  .
 \label{EYmn1}
\end{align}
If we restrict to $z_1$ and $z_2$ depending only on $\ell$, $m$ and $n$
(not on $\nu$),
then on substituting \eqref{peq} into \eqref{EYmn1}
we may simplify the sum using the binomial formula to obtain
\begin{align}
\ex\bigl[\Ymnl\bigr]
 \leb 
 &\, \sqrt{\frac{\la}{z_2}}\, \binom{m}{\ell}\binom{km}{k\ell}^{-1} \la^{km}
  \notag\\*
 \times&\,\frac{1}{z_1^{k\ell}\,z_2^{k(m-\ell)}}
 \parens{ \frac{f(z_2)+e^{z_2}(\cosh z_1-1)}{f(\la)}}^n
 ,\quad\forall\,z_1,z_2>0 \label{Ymnlless}
.
\end{align}
Observe that
$$
f(z_2)+e^{z_2}(\cosh z_1-1)=\frac{f(z_1+z_2)+f(z_2-z_1)}{2}.
$$
Inequality \eqref{YlesseH}, and thus the lemma,
are established by substituting this
and the Stirling-based approximation 
$\binom n {pn}=O(1) \frac1{\sqrt{ {n p(1-p)}}} \exp(n H(p))$
into \eqref{Ymnlless}, recalling that
$m=cn$, 
$\a=\ell/m$ and $\ab=1-\a$,
substituting $z_1 = \z_1 \l$ and $z_2 = \z_2 \l$, 
and observing that $\sqrt k = O(1)$.
For $\ell=m$ the Stirling-based approximation is inapplicable
but consistency of \eqref{YlesseH} with \eqref{Ymnlless}
is easily checked.
\end{proof}

Recall the definition of $\HH$ from \eqref{H=}.
Roughly speaking,
the following lemma
establishes the existence of $\zzz$ making $\HH$ negative.
An intuitive description of the behavior of 
$\HH$ is given at the start of the next section.

\begin{lemma} \label{Hlemma}
Let
\begin{align}
\ak &= e k^{-k/(k-2)} .   \label{ak}
\end{align}
For all 
$k \geq 4$
and $c \in (2/k,1)$,
there exist 
$\e=\e(c,k)>0$ and $\zn=\zn(c,k)>0$, both functions continuous in $c$,
such that
\begin{align}
 \big( \forall \a \in (0,\ak] \, \big) \, (\exists \zzz)
  & \colon
 \HH \leq (c \a)(\tfrac k 2-1) \ln ( \a / \ak)
  \text{ and } \z_2 \geq \zn  \hspace*{-2cm}
  \label{lemma1.2}
\\
 \big( \forall \a \in [\dak,1] \, \big) \, (\exists \zzz)
  & \colon
  \HH \leq -\e \text{ and } \z_2 \geq \zn .
  \label{lemma1.1}
\end{align}
\end{lemma}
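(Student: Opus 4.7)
\medskip
\noindent\textbf{Proof plan.} The approach is to exhibit, for each $\a$ in each region of the lemma, an explicit choice of $\zzz=(\z_1,\z_2)$ realizing the required bound on $\HH$; the two regions overlap on $[\dak,\ak]$, but different choices of $\zzz$ are used in each. The plan has three stages: a saddle-point analysis motivating the form of $\zzz$; a Taylor expansion in $\a$ for the small-$\a$ bound \eqref{lemma1.2}; and a pointwise-plus-compactness argument for the bulk bound \eqref{lemma1.1}.

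Setting $\partial_{\z_1}\HH=\partial_{\z_2}\HH=0$ gives a coupled system in $(\z_1,\z_2)$ whose positive solution $\zzz^*(\a)$ depends continuously on $\a$ and $c$. The symmetry $\a\mapsto 1-\a$, $\z_1\mapsto-\z_1$ forces $\z_1^*(\tfrac12)=0$, and a direct check gives $\zzz^*(\a)\to(0,1)$ as $\a\to 0^+$. These boundary behaviors suggest the appropriate parametrizations in each region, though in the interior of each region a cruder, explicitly defined choice of $\zzz$ will suffice.

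For the small-$\a$ bound \eqref{lemma1.2}, the crucial scaling turns out to be $\z_1\asymp\sqrt\a$ rather than $\z_1\asymp\a$. Taking $\z_1=r\sqrt\a$ and $\z_2=1-A\a$ and Taylor-expanding $f(\l(\z_2\pm\z_1))$ about $\l$ (using $\psi(\l)=ck$ throughout), the $A$-dependence cancels at leading order and
\[
\HH = c\a(\tfrac k2-1)\ln\a + c\a\,g(r,\l) + O(\a^{3/2})
\]
for an explicit function $g$. Optimizing $g$ in $r$ and matching the resulting constant against $c\a(\tfrac k2-1)\ln(\a/\ak)$ is what determines the particular value of $\ak$ in \eqref{ak}; a continuity/compactness argument then upgrades the asymptotic estimate to the full interval $(0,\ak]$.

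The bulk bound \eqref{lemma1.1} is a uniform negativity statement on $[\dak,1]$ for $c$ in any compact subinterval of $(2/k,1)$. The plan is to establish pointwise strict negativity of $\min_\zzz\HH(\a,\zzz;c)$ at every such $(\a,c)$, then conclude uniform negativity by continuity and compactness. The main obstacle will be $\a=\tfrac12$, where $H(\a)$ attains its maximum of $\ln 2$: with $\z_1^*=0$ the expression $\HH$ simplifies to $c\ln 2 + ck\ln\tfrac{1}{2\z_2} + \ln\tfrac{f(\l\z_2)}{f(\l)}$, which must be shown to be strictly negative for every $c\in(2/k,1)$, using $\psi(\l)=ck<k$; the case $c$ close to $1$ is sharpest and demands the most care.
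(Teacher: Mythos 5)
Your plan for the bulk bound \eqref{lemma1.1} contains the main gap: you reduce it to ``pointwise strict negativity of $\min_{\zzz}\HH$ at every $(\a,c)$, then compactness,'' but the pointwise negativity is exactly the hard quantitative content, and you offer no method for it except at $\a=\tfrac12$ --- and that computation is incorrect. The term $ck\a\ln(\a/\z_1)$ in \eqref{H=} blows up as $\z_1\to0^+$, so $\z_1\mapsto-\z_1$ is not a symmetry of $\HH$ and the minimizer at $\a=\tfrac12$ is not $\z_1^*=0$; the expression you write down is not $\HH$ at any admissible $\zzz$. (With $\zzz=(\tfrac12,\tfrac12)$ one gets $\HH=(c-1)\ln 2$ at $\a=\tfrac12$, which is the sharp behavior as $c\to1$.) Away from $\a=\tfrac12$ negativity is genuinely delicate: the paper needs a monotonicity-in-$\la$ argument reducing to the extreme case $c=1$, then an entropy bound for $\a$ near $\tfrac12$, an analytic bound for $k\ge7$, and an interval-arithmetic/Newton-type covering for $k=4,5,6$; that none of this is automatic is shown by $k=3$, where the natural choice gives $H_3(\tfrac12,\zzz;1)>0$. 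Your proposal assumes this verification rather than supplying it. You also never address the clause $\z_2\ge\zn$: near $\a=1$ the unconstrained minimizer drives $\z_2\to0$ (the paper freezes $\zzz=(1-\d,\d)$ on $[1-\d,1]$ and uses the symmetry $f(x)\ge f(-x)$ with swapped $(\z_1,\z_2)$ for $\a\in(\tfrac12,1-\d)$), and this clause matters because of the $\sqrt{1/\z_2}$ prefactor in \eqref{YlesseH}.

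For the small-$\a$ bound \eqref{lemma1.2} your scaling is right --- the paper indeed takes $\z_1=(ck)^{-1/2}\a^{1/2}$ and $\z_2=\ab$ --- but the execution via a Taylor expansion with an unquantified $O(\a^{3/2})$ error cannot by itself give the inequality on all of $(0,\ak]$: the target $(c\a)(\tfrac k2-1)\ln(\a/\ak)$ vanishes at $\a=\ak$, so near the right endpoint the error term is not small compared with the required bound, and the proposed ``continuity/compactness upgrade'' presupposes precisely the pointwise comparison that has not been established; moreover, if $\ak$ is fixed by making the leading constants match exactly, there is no slack left to absorb the error except what comes from $c<1$, which you do not quantify. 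The paper sidesteps all of this with global inequalities valid for every $\a$ in the range ($-\ab\ln\ab\le\a$, $\cosh x\le e^{x^2/2}$, and concavity of $\ln f$ giving $f(\la\z)/f(\la)\le e^{(\z-1)ck}$), yielding the closed-form bound $(c\a)(\tfrac k2-1)\ln\bigl[\a\,\tfrac1e(ck)^{k/(k-2)}\bigr]$ for all $\a\in(0,\ak)$, with $\ak$ arising simply from setting $c=1$ inside the logarithm; the exact constant $\tfrac k2-1$ and validity down to $\a=2/m$ are later essential for the $O(m^{-(k-2)})$ rate in Corollary~\ref{dupCor}, so an asymptotic-in-$\a$ statement is not an adequate substitute.
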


\begin{proof}
The lemma follows immediately from 
Claims~\ref{asmall}, 
\ref{alarge} and~\ref{ahalf},
all stated and proved in Section~\ref{ProveLemma},
respectively treating $\a$ in the ranges 
$(0, \aak]$,
$[\aak,1/2]$,
and
$(1/2,1]$.
A suitable function $\zzz$ is given explicitly in each case.
\end{proof}

The lemma yields the following corollary.
\begin{corollary} \label{dupCor}
For $k \geq 4$ and $m,n \to \infty$ with $\lim m/n \in (2/k,1)$,
$$
\sum_{\ell=2}^{m}\ex\bigl[\Ymnl\bigr]
=O\bigl(m^{-(k-2)}\bigr).
$$
\end{corollary}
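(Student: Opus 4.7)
My plan is first to transfer the expectation from the model $\AAmn$ to the Poisson-like model $\CCmn$ via Corollary~\ref{preq}, then apply the per-$\ell$ bound of Lemma~\ref{expbounds} with the $\zzz$ supplied by Lemma~\ref{Hlemma}, and finally show by elementary estimates that the resulting sum over $\ell$ is dominated by the $\ell=2$ term, which is of order $m^{-(k-2)}$. Because Lemma~\ref{Hlemma} guarantees $\z_2 \geq \zn > 0$ in both cases, the prefactor $\sqrt{1/\z_2}$ in \eqref{YlesseH} is absorbed into the implicit constant throughout.

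Concretely, I would split $\{2,\ldots,m\}$ at $\ell^* := \lceil \dak m \rceil$. For $\ell \geq \ell^*$, corresponding to $\a = \ell/m \in [\dak,1]$, I would invoke \eqref{lemma1.1} to get $\HH \leq -\e$, so each such term is $O(e^{-\e n})$; summing over at most $m$ values of $\ell$ gives $O(m e^{-\e n})$, which is $o(m^{-K})$ for every fixed $K$.

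For $\ell < \ell^*$ I would apply \eqref{lemma1.2}, which yields $\exp[n\HH] \leq (\ell/(\ak m))^{\ell(k/2-1)} =: a_\ell$. Writing $q := k/2-1 \geq 1$ (using $k \geq 4$), a brief calculation gives $a_{\ell+1}/a_\ell \leq ((\ell+1)e/(\ak m))^q$, so as long as $\ell+1 \leq \ak m/(2e)$ the ratio is bounded by $2^{-q} \leq 1/2$. The partial sum $\sum_{\ell=2}^{\lfloor \ak m/(2e) \rfloor} a_\ell$ is then geometric, of size $2a_2 = O(m^{-2q}) = O(m^{-(k-2)})$. In the residual range $\ell \in [\ak m/(2e), \dak m]$, the estimate $\ell/(\ak m) \leq \dak/\ak = 1/3$ gives $a_\ell \leq (1/3)^{\ell q}$, exponentially small in $m$, so the contribution from this range is again $o(m^{-K})$.

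The main step requiring care is the choice of the cutoff $\ak m/(2e)$ in the ratio argument: one has to verify that successive terms decrease by a factor bounded away from $1$, so that the geometric tail is controlled by its first term and the dominant contribution really comes from $\ell=2$, namely $(2/(\ak m))^{k-2}$. Once this is established, there is no further obstacle, and the desired bound $O(m^{-(k-2)})$ follows by summing the three pieces.
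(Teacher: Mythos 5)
Your proposal is correct and follows essentially the same route as the paper: split the sum at $\Theta(\ak m)$, use \eqref{lemma1.2} to show the small-$\ell$ terms decay geometrically from the dominant $\ell=2$ term $(2/(\ak m))^{k-2}$, and use \eqref{lemma1.1} to make the large-$\ell$ range exponentially small in $n$, the only difference being that you verify the geometric decay by a term-ratio bound where the paper interpolates using convexity of $\ell\ln(\ell/(\ak m))$. Two minor remarks: the opening transfer via Corollary~\ref{preq} is unnecessary because the corollary is stated for $\Ymnl$ in the $\CCmn$ model (the transfer to $\AAmn$ occurs only in the proof of Theorem~\ref{main}), and since $c=m/n$ varies with $n$ you should, as the paper does, use the continuity of $\e(c,k)$ and $\zn(c,k)$ in $c$ to fix uniform constants over a closed interval containing $\lim m/n$.
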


\begin{proof}
Since $\lim m/n \in (2/k,1)$, there exists 
a closed interval $I \subset (2/k,1)$ 
such that,
for all but finitely many cases,
$c=m/n \in I$. 
Where $\e(c,k)$ and $\zn(c,k)$ satisfy the conditions of Lemma~\ref{Hlemma}
define $\e = \e(I) = \min\set{\e(c,k) \colon c \in I}$,
and $\zn = \zn(I)$ likewise;
the minima exist by continuity of $\e$ and $\zn$ in $c$.
Then, for all but finitely many pairs $m,n$, 
inequalities \eqref{lemma1.2} and \eqref{lemma1.1} hold true.

Letting $\ell_k = \ak m = \Theta(n)$,
for $\ell \leq \ell_k/2$,
recalling that $\a c n = \a m = \ell$,
\eqref{YlesseH} and \eqref{lemma1.2} give
\begin{align} \label{ymnlBound}
\ex\bigl[\Ymnl\bigr]
 =O(1) \,
 \exp\bigl[(\tfrac k2-1) \ell \ln(\ell/\ell_k)
  \bigr] ,
\end{align}
where we have incorporated $\sqrt{\textfrac1 \zn}$ in the leading $O(1)$.
By convexity of
$\ell \ln(\ell/\ell_k)$,
interpolating
for $\ell \in [2, \ell_k/2]$
from the endpoints of this interval,
\begin{align}
\ell \ln(\ell/\ell_k)
 & \leq
 2 \ln(2/\ell_k)
  + \frac{\ell-2}{\ell_k/2-2} \parens{(\ell_k/2) \ln(1/2) - 2 \ln(2/\ell_k)}
  \notag
 \\ &=
 2 \ln(2/\ell_k) + (\ell-2) (-\ln 2 + o(1)) ,
 \notag
 \\ & \leq 2 \ln (2/\ell_k) - 0.6(\ell-2)
 \notag
\end{align}
for $n$ sufficiently large, where we have used that $\ell_k = \Theta(n)$
and $0.6 < \ln 2$.
Thus, 
for $\ell \in [2, \ell_k/2]$,
\begin{align*}
\ex\bigl[\Ymnl\bigr]
 & \leq
 O(1) \, \exp \big( (\tfrac k2-1) [2 \ln (2/\ell_k) -0.6 (\ell-2) ] \big)
 \\ & =
 O(1) \, m^{-(k-2)} \exp(-0.6 (\tfrac k2-1) (\ell-2)) ,
\end{align*}
where the last line 
incorporates $(2/\ak)^{k-2}$
in the $O(1)$.
Given this upper bound that is geometrically decreasing in $\ell$,
summing gives
$$
 \sum_{\ell=2}^{\floor{(\ak/2) m}}
  \ex\bigl[\Ymnl\bigr]
  = O \bigl(m^{-(k-2)}\bigr) .
$$

For $\ell > (\ak/2) m$, by \eqref{lemma1.1},
$ \ex\bigl[\Ymnl\bigr]
 =O(1) \,
 \exp(-\e n),
$
giving
$$\sum_{\ell=\ceil{(\ak/2) m}}^{m}
\ex\bigl[\Ymnl\bigr]
  = O(m) \exp(-\e n) 
  = \exp(-\Omega(n)) .
$$
Adding the two partial sums yields Corollary~\ref{dupCor}.
\end{proof}

\begin{proof}[Proof of Theorem~\ref{main}]
By the remarks at the start of this section, 
we need only consider the case $\lim m/n \in (2/k,1)$.
Under the hypotheses of Corollary~\ref{dupCor},
let $A \in \AAmn$ and $C \in \CCmn$ be uniformly random,
and let $\Xmn$ and $\Ymn$ denote the numbers of nonempty critical row sets of 
$A$ and $C$ respectively,
and $\Xmnl$ and $\Ymnl$ those of cardinality $\ell$.
$\Xmn^{(1)}=0$ since every row of $A$ has $k$ 1's.
(The bound on $\Ymn^{(1)}$ from \eqref{ymnlBound} is
$O(m^{-(\tfrac k2-1)})$, whose use would weaken the Corollary's conclusion.
$\Ymn^{(1)}$ is not necessarily $0$ since a row of $C$ can be 0,
for example if all the 1's in its defining configuration lie in a single cell.)
Then
$$
\ex\bigl[\Xmn\bigr]
= 0+ \sum_{\ell=2}^{m}\ex\bigl[\Xmnl\bigr]
= O(1) \sum_{\ell=2}^{m}\ex\bigl[\Ymnl\bigr]
=O\bigl(m^{-(k-2)}\bigr),
$$
the last two equalities coming from Corollary~\ref{dupCor}
and Corollary~\ref{preq}.
Then $\Pr{A \text{ is not of full rank}} \leq \E{\Xmn} = O(m^{-(k-2)})$,
so with probability $1-O(m^{-(k-2)})$,
$A$ is of full rank and any system $Ax=b$ is satisfiable.
\end{proof}

\section{\texorpdfstring%
{Analysis of the function ${\HH}$ to prove Lemma~\ref{Hlemma}}%
{Analysis of the function H to prove Lemma~\ref{Hlemma}}} 
\label{ProveLemma}
Recall the notation $\ab=1-\a$ and $\zzz=(\z_1,\z_2)$ 
as well as the definition of $\HH$ from \eqref{H=}.
In this section we 
use an explicit function $\zzz=\zzzargs$,
taking different forms in different ranges of $\a$,
to establish Claims~\ref{asmall}, 
\ref{alarge} and~\ref{ahalf}
and thus Lemma~\ref{Hlemma}.

For intuition about $\HH$, the case $k=4$ is indicative.
Figure~\ref{H4} shows a graph of the function value against $\a$, 
for a few choices of $c$,
with $\zzz$ given by \eqref{zsmall} for small $\a$, 
and by $\zzz=(\a,\ab)$ otherwise.
Numerical experiments suggest that the optimal choice of $\zzz$
leads to qualitatively similar results, though of course without the
kinks where we change from one functional form for $\zzz$ to another.
As shown, $\HH$ tends to 0 at $\a=0$ (treated in Claim~\ref{asmall}), 
but the dependence on $c$ here is not critical:
an analog of the claim, with different parameters,
could be obtained as long as $c$ is bounded away from 0 and infinity.
At $\a=1/2$ (treated in Claim~\ref{alarge}), 
the function tends to 0 as $c$ tends to 1,
so this is where $c < 1$ is required.
Claim~\ref{alarge} also covers
values of $\a$ between 0 and $1/2$ but bounded away from them;
here the function value is bounded away from 0 (for $c \leq 1$)
and could be dealt with by cruder means,
such as that by interval arithmetic in \cite{ArxivGreg}.
Function values for $\a>1/2$ 
(treated in Claim~\ref{ahalf})
are dominated
by their symmetric counterparts at $1-\a$,
except for some special treatment required near~$1$.

Lemma~\ref{Hlemma} only considers $k>3$.
The lemma can be extended to $k=3$, but this case
was already treated by \cite{DMfocs}
and the proof poses additional difficulties for us;
see further discussion after the proof of Claim~\ref{alarge},
and in Section~\ref{sharper}, specifically at \eqref{barzs}.

\begin{figure}[htbp]
\begin{centering}
\ifpdf{%
\includegraphics[bb=0.0in 0.0in 10in 7.5in,width=4.0in]{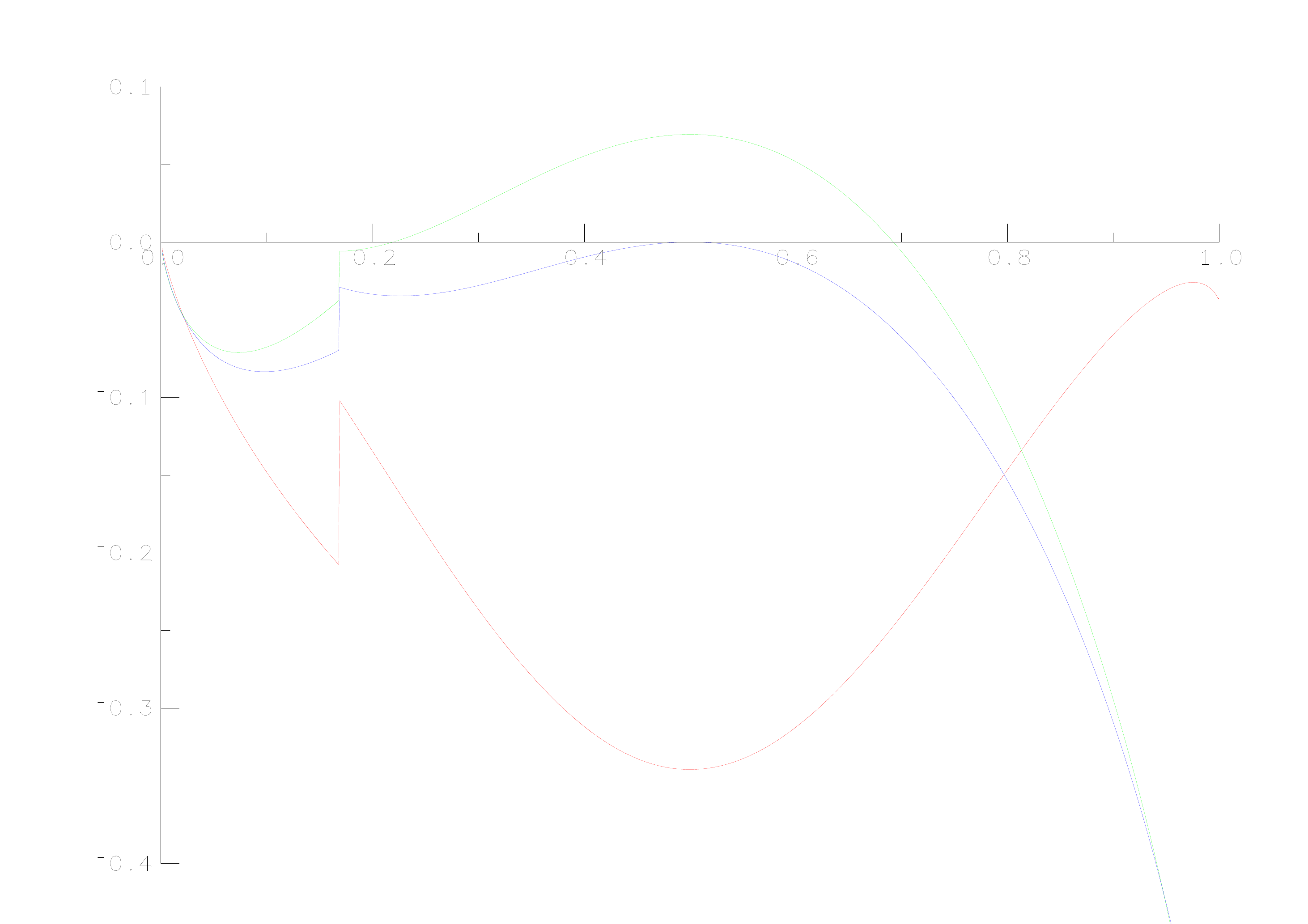}
}\else{%
\includegraphics[width=4.0in]{H4_alphak_crop.eps}
}\fi
\end{centering}
\caption{
Plot of $\HH$ versus $\a$, for $k=4$ and $c$ values of
0.51, 1.0, 1.1 (from bottom to top).
The kinks occur at $\ak$, where we switch functional forms for
$\zzzargs$.
}
\label{H4}
\end{figure}

\begin{claim} \label{asmall}
For all $k \geq 3$ and all $c \in (2/k, 1]$,
taking
\begin{align}
\z_1= (ck)^{-1/2} \a^{1/2}, \quad \z_2=\ab \label{zsmall}
\end{align}
yields 
$\HH \leq (c \a)(\tfrac k 2-1) \ln ( \a / \ak)$
for all $\a \in (0,\ak)$.
Also, for any $\d=\d(k)>0$ 
there exists $\e=\e(k)>0$ such that
$\HH < -\e$
for all $\a \in [\d,\aak]$.
In both cases, 
$\z_2 \geq \zn(k) := 1-\ak>0$. 
\end{claim}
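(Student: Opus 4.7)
The plan is to substitute the specified $(\z_1,\z_2)$ into \eqref{H=}, use the defining relation $\psi(\la)=ck$ and the p.g.f.\ identity \eqref{pgf} to simplify, and reduce the first assertion to a single analytic inequality for the log term; the second assertion will then be immediate from the first.

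With $\z_2=\ab$ the third summand of $\HH$ vanishes, and with $\z_1=(ck)^{-1/2}\a^{1/2}$ the second summand becomes $\frac{ck\a}{2}\ln(ck\a)$. Expanding both sides of the target bound using $\ln\ak = 1 - \frac{k}{k-2}\ln k$ and cancelling the common $\frac{c\a(k-2)}{2}\ln\a$ shows that the first inequality of the claim is equivalent to
\begin{equation*}
L(\a) := \ln\frac{f(\la(\ab+\z_1))+f(\la(\ab-\z_1))}{2f(\la)}
 \le c\ab\ln\ab - \tfrac{c\a(k-2)}{2} - \tfrac{ck\a}{2}\ln c.
\end{equation*}

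The key elementary identity, immediate from $f(x)=e^x-1-x$, is $f(\la(\ab+\z_1))+f(\la(\ab-\z_1)) = 2f(\la\ab) + 2e^{\la\ab}(\cosh(\la\z_1)-1)$, which splits $L$ into a ``main'' piece $L_1=\ln(f(\la\ab)/f(\la))$ and a ``perturbation'' $L_2$. By \eqref{pgf} one has $L_1 = \ln\E{\ab^{Z(\la)}}$, whose cumulant expansion (using $\E{Z(\la)} = ck$ and $\Var Z(\la)=\la\psi'(\la)$ from \eqref{VarZ}) gives a sharp leading-order value. Applying $\ln(1+x)\le x$, $\cosh(y)-1\le\tfrac12 y^2 e^{y^2/2}$, and the identity $f(\la)=\la(e^\la-1)/(ck)$ that follows from $\psi(\la)=ck$, one bounds $L_2 \le \frac{\la e^\la}{2(e^\la-1)}\a + O(\a^2)$. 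Matching these against the target reduces the leading-order inequality to $\frac{\la e^\la}{e^\la-1} \le ck(1-\ln c)$, which via the identity $\frac{\la e^\la}{e^\la-1} = \la + \frac{ck}{e^\la-1+ck}$ (also from $\psi(\la)=ck$) together with $\la<ck$ is straightforward to verify for $c\in(2/k,1]$ and $k\ge 3$.

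For the second sentence, the first part already yields $\HH\le(c\a)(\tfrac k 2-1)\ln(\a/\ak)$ on $[\d,\aak]$; since $\a/\ak\le 0.99$ and $c>2/k$, this is bounded above by a negative constant depending only on $k$ and $\d$, so one can take $\e$ to be its absolute value. The bound $\z_2=\ab\ge 1-\ak=\zn(k)$ holds trivially for $\a\le\ak$. The principal obstacle is the uniform control of the remainder terms on the full range $\a\in(0,\ak]$, not merely as $\a\to 0$: the specific value $\ak = ek^{-k/(k-2)}$ keeps $\la\z_1 = \la(\a/(ck))^{1/2}$ bounded (roughly by $\sqrt e$), so the $\cosh$ Taylor series remains well-behaved, and it is this feature that allows the leading-order analysis to propagate to the full range.
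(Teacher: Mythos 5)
Your setup and algebra are fine: substituting \eqref{zsmall} does kill the third summand, turns the second into $\tfrac{ck\a}{2}\ln(ck\a)$, and your reduction of the claimed bound to an inequality on the logarithmic term $L(\a)$ is a correct rearrangement; the identity $f(\la(\ab+\z_1))+f(\la(\ab-\z_1))=2f(\la\ab)+2e^{\la\ab}(\cosh(\la\z_1)-1)$ is also right, and is essentially the same splitting the paper exploits. The genuine gap is in how you bound the two pieces: a ``cumulant expansion'' of $L_1=\ln\E{\ab^{Z(\la)}}$ and a bound on $L_2$ of the form $\frac{\la e^\la}{2(e^\la-1)}\a+O(\a^2)$ are asymptotic statements as $\a\to 0$, not inequalities valid uniformly on $(0,\ak)$. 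The claim has no room for unquantified remainders: at $\a=\ak$ the target $(c\a)(\tfrac k2-1)\ln(\a/\ak)$ equals $0$, and for $c=1$ the intermediate bound one actually obtains, $(c\a)(\tfrac k2-1)\ln\bigl[\a\,\tfrac1e(ck)^{k/(k-2)}\bigr]$, also vanishes there, so near $\a\asymp\ak$ an error of order $\a^2$ is comparable to (or larger than) the entire available margin. You acknowledge this (``the principal obstacle''), but acknowledging it is not closing it: reducing to the leading-order inequality $\frac{\la e^\la}{e^\la-1}\le ck(1-\ln c)$ proves nothing about the full range without explicit, uniform control of every discarded term.

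The missing ingredient is to replace the expansions by exact inequalities. The paper's route: since $[\ln f(x)]''=e^x(1-x-e^{-x})/f^2(x)<0$, $\ln f$ is concave on $(0,\infty)$, so for any $\z\ge 0$ one has $f(\la\z)/f(\la)\le\exp\bigl((\z-1)\la f'(\la)/f(\la)\bigr)=\exp\bigl((\z-1)ck\bigr)$ (using $\psi(\la)=ck$); applying this to both $\z_2\pm\z_1$ (which are nonnegative since $\ak<0.2$) gives
$\ln\frac{f(\la(\z_2+\z_1))+f(\la(\z_2-\z_1))}{2f(\la)}\le ck(\z_2-1)+\ln\cosh(ck\z_1)\le -ck\a+\tfrac12 ck\a$,
with the last step from $\cosh x\le e^{x^2/2}$. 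Combined with $-\ab\ln\ab\le\a$ this yields, with no error terms, $\HH\le c\a\bigl[(\tfrac k2-1)\ln\a+(1-\tfrac k2)+\tfrac k2\ln(ck)\bigr]$, and then $c\le 1$ and the definition \eqref{ak} of $\ak$ give the stated bound for every $\a\in(0,\ak)$. Your second sentence (deducing the $[\d,\aak]$ statement from the first bound) and the trivial check $\z_2=\ab\ge 1-\ak$ are fine as written, but they stand only once the first inequality is actually proved uniformly.
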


The first part of the claim establishes \eqref{lemma1.2},
and the second part, with $\d=\ak/3$, establishes \eqref{lemma1.1} 
for $\a \in [\ak/3,\aak]$.
As both $\e$ and $\zn$ depend only on $k$ 
they are automatically continuous (constant) with respect to $c$,
thus satisfying the hypothesis of Lemma~\ref{Hlemma}.

\begin{proof}
Trivially,
$\z_2 = \ab \geq 1-\ak>0$,
since $\ak = e k^{-k/(k-2)} < e/k < 1$.
The issue in this range of $\a$ is 
to control the final logarithmic term of $\HH$
when the two summands within the logarithm are nearly equal.
Note that $\ln f(x)$
is concave on either side of 0
(diverging to $-\infty$ at $0$, it is not concave as a whole),
as
\begin{align}
\bigl[\ln f(x)\bigr]''=\frac{e^x(1-x-e^{-x})}{f^2(x)}<0.
 \label{flogconcave}
\end{align}
Since 
$\left. \frac d{d\D} \ln f(\l (1+\D)) \right|_{\D=0} 
 = \frac{\l f'(\l)}{f(\l)}
$,
if $\l$ and $\l\tcdot(1+\D)$ are on the same side of 0 (i.e., if $1+\D \geq 0$)
then concavity gives
$ \ln f(\l (1+\D))
  \leq \ln f(\l) + \D \frac{\l f'(\l)}{f(\l)} $.
Or, with $\z=1+\D$, if $\z \geq 0$ then
\begin{align}
\frac{f(\l \z)}{f(\l)}
  \leq \exp\left((\z-1) \frac{\l f'(\l)}{f(\l)}\right)
  = \exp\left((\z-1) ck \right) ,
  \label{fratioexpbound}
\end{align}
recalling from \eqref{lambda} 
that $\l f'(\l)/f(\l) = ck$.
It is easily checked that \eqref{ak} gives $\ak < 0.2$,
hence from \eqref{zsmall}
$\z_2>0.8$ and $\z_1<0.4$, so
$\z_2-\z_1 \geq 0$ and of course
$\z_2+\z_1 \geq 0$.
Thus for the final term of $\HH$, from \eqref{fratioexpbound} we have
\begin{align*}
 \hspace*{0.5in}&\hspace*{-0.5in}
 \ln\frac{f(\l\tcdot(\z_2+\z_1))+f(\l\tcdot(\z_2-\z_1))}{2f(\la)}
  \\ &\leq
  \ln \left(
    \frac{ \exp(ck(\z_2+\z_1-1))} 2 +
    \frac{ \exp(ck(\z_2-\z_1-1))} 2
     \right)
  \\&=
  \ln \left(
    \exp(ck(\z_2-1)) \left[
    \frac{ \exp(ck \z_1) + \exp(-ck \z_1)}{2}
     \right] \right)
  \\&=
  ck(\z_2-1) + \ln \cosh (ck \z_1)
  \\& \leq
  ck(\z_2-1) + (ck \z_1)^2/2,
\end{align*}
using the well known inequality 
$\cosh x \leq e^{x^2/2}$ 
Now also using $-\ab \ln \ab \leq \a$ for all $\ab \in [0,1)$,
substituting $\zzz$
from \eqref{zsmall} into $\HH$,
\begin{align*}
 \HH
  & \leq -c \a \ln \a + c \a + c k \a \ln ((c k \a)^{1/2}) +0
    +ck (-\a) + \sqrt{ck\a}^2/2
  \\& = 
  c \a [(\tfrac k 2-1) \ln \a + (1-\tfrac{k}2) + \tfrac k 2 \ln (c k) ]
  \\& =
  (c \a)(\tfrac k 2-1) \ln [ \a \: \tfrac1e (ck)^{k/(k-2)} ] .
\end{align*}
Pessimistically taking $c=1$ within the logarithm
and recalling $\ak$ from \eqref{ak},
\begin{align}
\HH
  & \leq
  (c \a)(\tfrac k 2-1) \ln ( \a / \ak) .
   \label{small1}
\end{align}
(A different upper bound for $c$ would simply call for a different
value for $\ak$.)
This proves the first part of the claim.

Clearly, for all $\a \in (0,\ak)$, $\a \ln(\a/\ak)$ 
is negative,
so for any $\d=\d(k)>0$, over 
$\a \in [\d,\aak]$ 
it is bounded away from 0.
By hypothesis, $c \geq 2/k$ (any positive constant would do),
thus $\HH$ is also bounded away from 0, i.e., there is some $\e=\e(k)>0$
for which $\HH \leq -\e$.
This proves the second part of the claim.
\end{proof}

\begin{claim} \label{alarge}
For all $k \geq 4$ and all $c \in (2/k, 1)$,
there exists $\e = \e(c,k)>0$, 
with $\e(c,k)$ continuous in $c$,
such that
for all $\a \in \bigint$,
taking 
\begin{align}
\z_1=\a, \quad \z_2=\ab
 \label{usualzzz}
\end{align}
yields
$\HH < -\e$ 
and (trivially) $\z_2 \geq \zn := 1/2$.
\end{claim}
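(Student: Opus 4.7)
My plan is to plug $\z_1 = \a$ and $\z_2 = \ab$ into the definition \eqref{H=}. Since $\ln(\a/\z_1) = \ln(\ab/\z_2) = 0$, the two middle cross-entropy terms of $H_k$ collapse, leaving
\begin{equation*}
g(\a) := H_k(\a,(\a,\ab);c) = cH(\a) + \ln\frac{f(\la) + f(\la(1-2\a))}{2f(\la)}.
\end{equation*}
The secondary requirement $\z_2 \geq \zn := 1/2$ is immediate from $\a \leq 1/2$. Direct evaluation at $\a = 1/2$, using $f(0) = 0$, yields $g(1/2) = c\ln 2 - \ln 2 = -(1-c)\ln 2$, strictly negative for $c < 1$.

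The claim then reduces to showing $g(\a) < 0$ throughout $\bigint$, pointwise in $c$: smoothness of $g$ in $(\a, c)$ (via $\la = \psiinv(ck)$) and compactness of the interval make the maximum $g^*(c) := \max_{\a \in \bigint} g(\a)$ continuous in $c$, so setting $\e(c,k) := -g^*(c)/2$ will serve once we know $g^*(c) < 0$. Computing
\begin{equation*}
g'(\a) = -c\ln(\a/\ab) - \frac{2\la f'(\la(1-2\a))}{f(\la) + f(\la(1-2\a))},
\end{equation*}
we find $g'(1/2) = 0$ (because $f'(0) = 0$), and a short calculation gives $g''(1/2) = -4c + 4\la^2/f(\la)$, whose sign agrees with that of $k\la - (e^\la - 1)$; both signs occur within the claim's parameter range. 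In either case, Taylor expansion of $g$ around $\a = 1/2$, together with $g(1/2) < 0$, yields $g(\a) < 0$ on some right-neighborhood $[1/2 - \d(c,k),\, 1/2]$.

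The main obstacle is controlling $g$ on the remainder $[\aak,\, 1/2 - \d]$. My plan is to split this further at any interior critical point $\akstar$ of $g$ (where $g'$ changes sign) and argue that $g$ is monotone on each piece, so that its maximum is attained at an endpoint. For the left-hand endpoint $\aak$, I would use the concavity of $\ln f$ from \eqref{flogconcave}, which yields $f(\la(1-2\a)) \leq f(\la)\exp(-2ck\a)$ and hence $g(\a) \leq cH(\a) + \ln\tfrac{1}{2}(1 + e^{-2ck\a})$; at $\a = \aak$ the right-hand side can be shown to be strictly negative using elementary entropy estimates tailored to $\ak = e k^{-k/(k-2)}$ from \eqref{ak}.

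Where I expect the main difficulty to lie is the regime $c \to 1$: then $g(1/2) = -(1-c)\ln 2$ and any interior local maximum of $g$ both tend to zero, and the concavity-of-$\ln f$ upper bound becomes loose near $\a = 1/2$. In that regime I would refine the estimate via the PGF identity $f(\la z)/f(\la) = \ex[z^{Z(\la)}]$ from \eqref{pgf}, applied with $z = 1 - 2\a$, exploiting concentration of the truncated Poisson $Z(\la)$ about its mean $\psi(\la) = ck$. The paper's restriction to $k \geq 4$, while $k = 3$ is treated by Dubois and Mandler, plausibly signals that precisely this sharpness issue becomes critically acute at $k = 3$.
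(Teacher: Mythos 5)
Your reduction to $g(\a)=cH(\a)+\ln\frac{f(\la)+f(\la(1-2\a))}{2f(\la)}$, the value $g(1/2)=-(1-c)\ln 2$, and the compactness/continuity framing for $\e(c,k)$ are all correct, and your derivative computations check out. But there is a genuine gap exactly where the real work lies: the middle range of $\a$. Your plan there is to ``split at interior critical points and argue that $g$ is monotone on each piece, so that its maximum is attained at an endpoint'' --- this is circular, since the interior critical points are precisely the candidate local maxima, and you supply no way to bound $g$ at them. The only quantitative tool you offer for this range, the concavity bound $g(\a)\le cH(\a)+\ln\tfrac12\bigl(1+e^{-2ck\a}\bigr)$, is demonstrably insufficient for small $k$: for $k=4$ and $c$ near $1$, at $\a=0.3\in[\aak,\akstar]$ the right-hand side is positive (about $+0.005$ at $c=1$), so it cannot certify negativity on $[\aak,\akstar]$ for $k=4,5,6$. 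Your closing worry about $c\to1$ is, by contrast, not where the difficulty sits: the claim is pointwise in $c$ (with continuity of $\e$), which your framing already handles once pointwise negativity is known; and near $\a=1/2$ a direct quadratic bound suffices.

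For comparison, the paper resolves these issues by two devices you do not have. First, a monotonicity lemma in the hidden parameter $\la$: whenever $\Hkala\ge0$, one has $\partial\Hkala/\partial\la\ge\sigma_k>0$ on $[0,\lak]$. This reduces every $c<1$ to the single extreme case $c=1$ (i.e.\ $\la=\lak$) and, crucially, converts the weak conclusion $\Hka\le0$ at $c=1$ (note $\Hk(1/2)=0$ there, so nothing stronger is available) into strict negativity for every $c<1$. Second, at $c=1$ the range $[\akstar,1/2]$ is handled by the explicit quadratic bound defining $\akstar$, while on $[\aak,\akstar]$ a crude bound of the kind you propose works only for $k\ge7$; for $k=4,5,6$ the paper needs a rigorous Newton-like interval-covering (interval-arithmetic) verification exploiting convexity of $H$ and monotonicity of the two terms of $\Hka$. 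Without either the monotonicity-in-$\la$ reduction or some substitute numerical certification on the middle range, your outline cannot be completed as stated.
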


\begin{proof} 
Recall the definition of $\HH$ in \eqref{H=},
including its use of 
$\la=\la(kc)=\psiinv(kc)$, i.e., $\psi(\la)=kc$
(see \eqref{lambda}).
If we let
\begin{align}
\gala & := \frac{f(\la\tcdot(1-2\a))} {f(\la)}
\label{gdef}
\end{align}
then we have
\begin{align}
H_k(\a,\zzz(\a;c);c)
 &= cH(\a)+\ln\frac{f(\la)+f(\la\tcdot(1-2\a))}{2f(\la)}
\label{Hk2}
\\ &=
\frac{\psi(\la)}k H(\a) 
+ \ln\frac{1+\gala}2
=:\Hkala .
\label{Hk3}
\end{align}
The advantage
of $\Hkala$ over $H_k(\a,\zzz;c)$ is that the former is an explicit function
of the ``hidden'' parameter $\la=\la(ck)$, 
while the latter depends on $c$ both explicitly,
and implicitly via $\la(ck)$. 
(To put it another way, $\la$ appears repeatedly in $\HH$
and is only implicitly defined as $\psiinv$ (see \eqref{lambda}),
where $\psi$ appears just once in $H_k$ 
and is explicitly defined (see \eqref{fdef}).)

Since $\la(\cdot)$ is increasing (see after \eqref{lambda})
and $c\in (2/k,1)$, 
\begin{align}
\la := \la(ck) \in 
(\la(2),\la(ck)] \subset
(0,\lak) 
 , \quad \text{ where } \quad
\lak:=\la(k) .
 \label{lakdef}
\end{align}
We now argue that it suffices to consider only the largest possible value of
$c$, namely $c=1$, or correspondingly of $\la$, namely $\la=\lak$.
Referring back to the original question about the $k$-XORSAT phase transition,
in the unconstrained model such a form of monotonicity 
is obvious: 
if random instances of given density are a.a.s.\ satisfiable, 
the same is true of sparser instances, as there is a coupling in which
we simply eliminate some constraints. 
But in the constrained model in which we are now working, 
monotonicity is not obvious: it is not clear that sparser instances
are more likely to be satisfiable than denser ones.
We attempted unsuccessfully to show this 
by converting to and from the unconstrained model.

In the next part we prove a more limited form of monotonicity,
in a short following section we show as a consequence that it suffices to 
show that $H_k(\a;\lak) \leq 0$,
and in a third part we do so.

\subsection*{Monotonicity}
For $k \geq 4$, 
there exists a $\sigma_k>0$ 
such that for all
$\a\in \bigint$ and
$\la\in [0,\lak]$,
\begin{align}
\text{if } \Hkala\ge 0 & \text{ then } 
\frac{\partial \Hkala}{\partial\la}\ge \sigma_k .
\label{mon}
\end{align}
In words, as a function of $\la$, $\Hkala$ is strictly increasing when it is non-negative. 

By \eqref{Hk3}, the condition $\Hkala\ge 0$ is equivalent to
\begin{equation}\label{Ha>}
H(\a)\ge\frac{k}{\psi(\la)}\ln\frac{2}{1+\gala}.
\end{equation} 
Also,
\begin{align}
\frac{\partial \ln \gala}{\partial \la}
 &=
 \frac{\partial \gala)/ \partial \la}{\gala}
 = 
 \frac{\partial \ln f(\la \tcdot(1-2\a))}{\partial \la}
  - \frac{\partial \ln f(\la)}{\partial \la}
\notag \\&=
 (1-2\a) f'(\la\tcdot(1-2\a))/f(\la\tcdot(1-2\a)) - f'(\la)/f(\la)
\notag \\&=
 \frac1\la \psi(\la\tcdot(1-2\a)) - \frac1\la \psi(\la) ,
 \notag
\end{align}
from which
\begin{align}
 \frac{\partial \gala)}{\partial \la}
  &=
 \la^{-1} \gala \big[ \psi(\la\tcdot(1-2\a)) - \psi(\la) \big] .
 \label{gpartial}
\end{align}

Differentiating \eqref{Hk3}, 
under the assumption that $\Hkala>0$ 
and using \eqref{Ha>} and \eqref{gpartial} in the first inequality,
\begin{align}
\frac{\partial \Hkala}{\partial \la}
 &=
 k^{-1}\psi'(\la) H(\a)+
    \frac{\partial \gala/\partial\la}{1+\gala}
\notag \\ &\ge
\frac{\psi'(\la)}{\psi(\la)}\ln\frac{2}{1+\gala}
+\la^{-1}\frac{\gala}{1+\gala}\bigl[\psi(\la(1-2\a))-\psi(\la)\bigr] 
\notag \\ &\ge
 \frac{\psi'(\la)}{\psi(\la)}
 \left[\ln\frac{2}{1+\gala}-2\a\psi(\la)\frac{\gala}{1+\gala}\right] ,
 \label{H'1}
\end{align}
where the second inequality uses that $\psi'(\la) > 0$ and,
by convexity of $\psi$ (see Claim~\ref{psi''} for both), that
$\psi(\la(1-2\a))-\psi(\la)\ge -2\la\a \psi'(\la)$.

Now, regarding $\gala$ as an independent quantity, 
the RHS of \eqref{H'1} is decreasing with $\gala$,
and for $\la \leq 1/2$
\begin{align}
\gala &\le \exp\bigl[-2\a\psi(\la)\bigr] \label{gprop}
\end{align}
since concavity of $\ln f(x)$ for $x>0$ (see \eqref{flogconcave})
means that
$\ln \gala 
 \leq -2\a\la \frac{d}{d\l}(\ln f(\la))
 = -2 \a \la \frac{f'(\la)}{f(\la)}
 = -2 \a \psi(\la)$.
It follows then from \eqref{H'1} that 
\begin{align}
\frac{\partial \Hkala}{\partial \la}
 &\geq
\frac{\psi'(\la)}{\psi(\la)}F(2\a \psi(\la))
\label{H'2}
\end{align}
where
\begin{equation*}
F(x):=\ln\frac{2}{1+e^{-x}}-\frac{x e^{-x}}{1+e^{-x}}.
\end{equation*}
Using $\ln(x) \leq x-1$ we can confirm that 
$\ln \frac2{1+e^{-x}} 
 = -\ln (\tfrac12 + \tfrac12 e^{-x}) 
 \geq -(\tfrac12 e^{-x} - \tfrac12) 
 = \frac{\sinh(x)}{1+e^x}$, 
from which
\begin{align}
F(x) &\ge \frac{\sinh x -x}{1+e^x} \ge \frac{x^3}{6(1+e^x)}
 .
\label{F>}
\end{align}
By definition (see \eqref{lambda}),
$\psi(\la)=ck$, 
and here, $x=2\a \psi(\la) = 2\a c k \leq k$.
With this, \eqref{H'2} and \eqref{F>},
\begin{align*}
\frac{\partial \Hkala}{\partial \la}
 \geq \frac{\psi'(\la)}{\psi(\la)} \; \frac{(2\a\psi(\la))^3}{6(1+e^k)}
 = \frac{\psi'(\la) (2\a)^3 (\psi(\la))^2} {6(1+e^k)}
  > 1.7 \ak^3 / (1+e^k) .
\end{align*}
For the final inequality,
calling again on Claim~\ref{psi''},
$\psi'$ is increasing, so
$\psi'(\la) \geq \psi'(0) = 1/3$.
Here we in the range $\a \geq \aak$, 
and again $\psi(\la)=ck$, which by hypothesis is $>2$.
This establishes \eqref{mon} 
with $\sigma_k:= 1.7 \ak^3/(1+e^k)$.

\subsection*{Application of monotonicity}
For $c \in (2/k,1)$ as hypothesized in the Claim, 
we will show that 
\begin{align}
m_k(c) & := 
\sup \set{\Hkala \colon \a\in \bigint, \, \la\in [0,\la(ck)]} 
 < 0.
  \label{mkneg}
\end{align}
By continuity of $\Hkala$, 
the supremum 
is attained at some $(\ahat,\lahat)$.
Recall from \eqref{lakdef} that $\la(ck) < \lak$.
By \eqref{mon}, 
if $H_k(\ahat;\lahat) \geq 0$ then
$\partial H_k(\ahat;\la)/\partial \la\ge \sigma_k$ 
for all $\la\in [\lahat,\lak]$,
implying that $H(\ahat,\lak) > 0$.
In the next part we will show that this is impossible ---
that $H(\a,\lak) \leq 0$ ---
and thus that $m_k(c)<0$.
For Claim~\ref{alarge} we may thus take $\e(c,k)= - m_k(c)$.
That this is continuous in $c$ is immediate from continuity 
of $\Hkala$.

\subsection*{Analysis of the extreme case}
The proof of the Claim is complete except for treatment of the extreme case,
$c=1$ or equivalently $\la=\lak$, namely showing that 
\begin{align} 
\Hka := H_k(\a;\lak) \leq 0
 \label{Hkadef}
\end{align}
for all $\a\in \bigint$. 
(Observe, e.g.\ from \eqref{Hk1} below, that $\Hk(1/2)=0$.)
We begin with
\begin{align}
 \Hka 
  &= H(\a)+\ln\frac{f(\lak)+f(\lak(1-2\a))}{2f(\lak)}
   \label{Hk1} \\ &= 
   H(\a)-\ln 2+\ln\left(1+\frac{f(\lak \tcdot (1-2\a))}{f(\lak)}\right)
 \notag \\ &\le
  \;-\; \frac{(1-2\a)^2}{2}\left(1-\frac{\lak^2e^{\lak(1-2\a)}}{f(\lak)}\right) ,
 \label{Hka}
\end{align}
where inequality \eqref{Hka} uses that, as $H''(\a)\le -4$, 
\begin{align}
H(\a)-H(1/2)\le -\tfrac{1}{2}(1-2\a)^2 ,
 \label{entUB}
\end{align}
and that
\begin{align}
 \ln(1+x) \le x 
 \quad \text{ and } \quad 
f(x) &= \frac{x^2}{2}\sum_{j\ge 0}\frac{2x^j}{(j+2)!}
 \le \frac{x^2e^x}{2}
 .  \label{fbound}
\end{align}

\mysubsubsec{Case $\a$ near $1/2$}
It is immediate from \eqref{Hka}
that $\Hka \leq 0$ for
$\a$ sufficiently close to $1/2$, namely for
$\a \in [\akstar, 1/2]$, where
\begin{align} \label{defakstar}
 \akstar 
  := \frac12 \left(1-\frac{1}{\lak}\ln\frac{f(\lak)}{\lak^2}\right).
\end{align}
Let us confirm that $\akstar\in (0,1/2)$, i.e., that
$\frac1{\lak} \ln(f(\lak)/\lak^2) \in (0,1)$.
First, we show that for all $k \ge 3$, $\lak>k-1$.
This is equivalent to $k > \psi(k-1)$, or
explicitly to $e^{k-1}>1+k(k-1)$, which 
follows for $k \geq 7$ by use of $e^x > 1+\tfrac16 x^3$,
and simply by checking for $k<7$.
Then, by definition, 
$k = \psi(\lak)= \lak+{\lak^2}/{f(\lak)}$, 
so $\lak>k-1$ implies that $\lak^2 / f(\lak) < 1$, 
giving $\tfrac1\lak \ln(f(\lak)/\lak^2)>0$.
Also, $\lak>k-1$ implies $\lak \geq 1$, 
from which $f(\lak)/\lak^2 \leq f(\lak) < e^\lak$,
and $\tfrac1\lak \ln(f(\lak)/\lak^2)<1$.

\mysubsubsec{Case $\a$ away from $1/2$}
We now treat 
$ \a \in [\aak,\akstar]  $
through two sub-cases.

\mysubsubsec{Subcase $k\ge 7$}
Since $\frac{f(\lak \tcdot (1-2\a))}{f(\lak)} = \gala \leq e^{-2\a k}$
(by \eqref{gdef} and \eqref{gprop},
the latter relying on $\a \leq \akstar \leq 1/2$), 
we have from \eqref{Hk1} that
\begin{equation}\label{tildeHalphak}
\Hka<
H(\a)+\ln\frac{1+e^{-2 \cdot \a k}}{2} 
 \leq 
H(\akstar)+\ln\frac{1+e^{-2 \cdot \aak k}}{2} .
\end{equation}

Let us show that $\akstar$ decreases with $k$,
implying that 
$H(\akstar) \leq H(\astar_7)$.
Since $\la(\cdot)$ is increasing (see after \eqref{lambda}),
it suffices to show that $\tfrac1x \ln (f(x)/x^2)$ increases with $x$ 
for $x \geq \l_3$; we will show it for all $x>0$.
Differentiating,
$$
\frac d{dx} 
\left(\frac{1}{x}\ln\frac{f(x)}{x^2}\right) = -\frac{1}{x^2} \, G(x)
\quad \text{where} \quad
G(x):=\ln \frac{f(x)}{x^2}+2-\frac{xf'(x)}{f(x)} ,
$$
so we must show that $G(x)<0$ for $x>0$.
Now, $\lim_{x\downarrow 0}G(x)=-\ln 2<0$, 
so it suffices to show that
$G'(x) = \bigl(\psi(x)-2-x\psi'(x)\bigr)/x \leq 0$,
or equivalently
$\psi(x)-2-x\psi'(x) \leq 0$.
This is true, since this expression is $0$ at $x=0$
and its derivative is simply $-\psi''(x)$, which is $\leq 0$ by
convexity of $\psi$ (see Claim~\ref{psi''}).

Also, recalling the definition of $\ak$ from \eqref{ak},
differentiation immediately shows that $\ak k = e k^{-2/(k-1)}$
increases with $k$, so that 
$\ak k \geq 7 \, \a_7$.

So, for $k\ge 7$ and $\a \in \botint$, 
\eqref{tildeHalphak} yields the cruder bound
\begin{equation}\label{simpler}
\Hka < 
  H(\astar_7)+\ln\frac{1+e^{-2 \cdot 0.99 \cdot 7 \, \a_7}}{2}
  < -0.019 .
\end{equation}

\mysubsubsec{Subcase $k=4,5,6$} 
Notice that, for $\a\in [0,1/2]$, 
the entropy term $H(\a)$ in \eqref{Hk1} for $\Hka$ is increasing, 
while the logarithmic term is decreasing. 
Consequently, if $0<x<x'\le 1/2$ are such that
\begin{equation}\label{Hkint}
H(x')+\ln\frac{f(\lak)+f(\lak \tcdot (1-2x))}{2f(\lak)}<0,
\end{equation}
then $\Hka<0$ for all $\a\in [x,x']$. 
A collection of such intervals $[x,x']$ covering $[\aak,\akstar]$
gives an ``interval arithmetic'' proof that 
$\Hka \leq 0$ on $[\aak,\akstar]$,
and there is an elegant iterative procedure for finding such a cover.

The LHS of \eqref{Hkint} is equal to
\begin{align}
 \Hk(x)+H(x')-H(x) 
  & < \Hk(x)+(x'-x) H'(x)
  \label{Hkint2}
\end{align}
by convexity of $H$.
Thus, inequality \eqref{Hkint} is satisfied
if the RHS of \eqref{Hkint2} is 0,
i.e., if 
\begin{align} \label{iterate}
x' &= x-\frac{\Hk(x)}{H'(x)}
\end{align}
(Note that $\Hk(x)<0$, so $x'>x$.)
We apply \eqref{iterate},
reminiscent of Newton-Raphson,
as an iterative update rule,
with $x_i=x$ and $x_{i+1}=x'$,
to cover the interval $[\aak,\akstar]$.

For $k=6$, taking $x_0 = x = \aak \approx 0.1831$ 
gives $x_1 =x' \approx 0.2620$, 
showing that $H_6(\a) \leq 0$ on $[x_0,x_1]$.
Then, taking $x=x_1$ gives $x_2 =x' \approx 0.3421$,
showing that $H_6(\a) \leq 0$ on $[x_1,x_2]$.
Since $\astar_6 < 0.3024 < x_2$,
for $k=6$ these two intervals suffice to prove negativity of $\Hka$
over $[\aak,\akstar]$.

For $k=5$, following the same procedure covers $[\aak,\akstar]$
with 3 intervals. 
Likewise, for $k=4$, $[\aak,\akstar]$ is covered with 8 intervals.

In fact, \eqref{simpler} and a check of the intervals for $k=4,5,6$
yields that, for $k \geq 4$,
\begin{align} \label{midrange}
\Hka &< -0.0012 \quad (\forall \a \in \botint). 
\end{align}
This completes the proof of Claim~\ref{alarge}. 
\end{proof}

We have not addressed $k=3$, already treated by \cite{DM02},
and indeed with $\zzz$ as above, 
$H_3(\tfrac12,\zzz;1)$ is positive.
We remark that we can extend Claim~\ref{alarge} to $k=3$ by choosing
$\zzz$ differently, notably as given by \eqref{barzs}.
The motivation is that the equalities \eqref{barzs}
hold for the optimal $\zz=\la \zzz$
at the stationary points $(\ahat,\hat k)$ of the function
$\min_{\zz} \Hk(\a, \zz/\la; \psi^{-1}(ck))$,
assuming (without justification) that the implicit-differentiation rules
apply.
The monotonicity condition (the equivalent of \eqref{mon}) 
then applies for all $\a \in (0,1/2]$.
For details, see \cite[Appendix (b)]{ArxivBoris}.
An interval arithmetic argument verifies that this choice makes
$H_3(\a,\zzz;1) < 0$ for $\a \in [0.99\a_3,\astar_3]$, 
as we will show after \eqref{barzs}
where this is needed to treat the phase transition more precisely.
If we make this extension,
Claim~\ref{ahalf} also extends immediately to $k=3$.

We also remark that 
if we alter the hypotheses of Claim~\ref{alarge}
to exclude $\a$ near $1/2$ then we may allow $c=1$,
as formalized below (where the choice of $0.49$ is arbitrary).
This will be used when we narrow the phase transition window
in Section~\ref{sharper}.
\begin{remark} \label{RemAway}
For all $k \geq 4$ 
there exists $\e = \e(k)>0$, 
such that
for all $\a \in \awayint$ 
and all $c \in [2/k, 1]$,
taking 
$\z_1=\a, \quad \z_2=\ab$
yields
$\HH < -\e$ 
and (trivially) $\z_2 \geq \zn := 1/2$.
\end{remark}

\begin{proof}
The substitution gives $\HH=\Hkala$ (see \eqref{Hk3}),
the range $c \in [2/k,1]$ corresponds to $\la \in [0,\lak]$,
and it suffices to show that 
$$ \sup \set{\Hkala \colon \a\in \awayint ,\, \la\in [0,\la_k]} 
 < 0 .
$$
In analogy with \eqref{mkneg},
by continuity, the supremum over this closed domain is achieved at some
$(\ahat,\lahat)$.
We prove by contradiction that $\Hkhat<0$.
If not, $\Hkhat \geq 0$.
If $\lahat<\lak$ then as argued previously this implies 
$\Hk(\ahat,\lak) = \Hk(\ahat)>0$,
while if $\lahat=\lak$ then, directly, $\Hk(\ahat) \geq 0$.
We now show that $\Hka<0$ for $\a \in \awayint$, by modifying the
previous argument 
that $\Hka \leq 0$ for $\a \in \bigint$.
Referring to \eqref{defakstar}, for any $\akp$ with $\akstar<\akp<0.49$,
inequality \eqref{Hka} shows that 
$\Hka<0$ for $\a \in [\akp,0.49]$.
(Recall that $\akstar$ is decreasing in $k$ 
--- see after \eqref{tildeHalphak} ---
so for all $k \geq 3$, $\akstar \leq \astar_3 < \akpt$.)
And from \eqref{simpler},
continuity shows that for some $\aplus_7$ slightly larger than $\astar_7$
we have $\Hka< -0.018$ for all $\a \in [\aak,\aplus_7]$ and $k \geq 7$.
Likewise, for the numerically treated cases $k=4,5,6$,
\eqref{midrange} extends by continuity to show that, 
for some $\akp$ slightly larger than $\akstar$,
$\Hka< -0.0011$ for all $\a \in [\aak,\akp]$.
\end{proof}

\begin{claim} \label{ahalf}
For all $k \geq 4$ and all $c \in (2/k, 1)$,
there exist $\e = \e(c,k)>0$ and $\zn = \zn(c,k) >0$,
both functions continuous in $c$, such that
for all $\a \in (1/2,1]$
there exists $\zzz$ for which 
$\HH < -\e$ and $\z_2 \geq \zn$.
\end{claim}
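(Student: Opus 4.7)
The plan is to cover $\a \in (1/2, 1]$ in two overlapping ranges: a bulk case $\a \in (1/2, 1-\aak]$, handled by a reflection argument that transfers Claim~\ref{alarge} from $\ab$ to $\a$; and a tail case $\a \in (1-\aak, 1]$, handled by a direct choice of $\zzz$ that keeps $\z_2$ bounded away from zero.

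\textbf{Reflection identity.} For $\a \in (1/2, 1)$, set $y = \la(2\a-1) > 0$. With $(\z_1, \z_2) = (\a, \ab)$ and also with $(\z_1,\z_2)=(\ab, \a)$, both the $\ln(\a/\z_1)$ and the $\ln(\ab/\z_2)$ summand in \eqref{H=} vanish, so using $cH(\a) = cH(\ab)$ one finds
\[
H_k(\a, (\a,\ab); c) - H_k(\ab, (\ab,\a); c)
 \;=\; \ln\frac{f(\la) + f(-y)}{f(\la) + f(y)} \;<\; 0,
\]
where the inequality uses $f(y) - f(-y) = 2(\sinh y - y) > 0$. For $\a \in (1/2, 1-\aak]$ we have $\ab \in [\aak, 1/2]$, so Claim~\ref{alarge} supplies $\e_1 = \e_1(c,k) > 0$, continuous in $c$, with $H_k(\ab,(\ab,\a);c) \leq -\e_1$. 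Taking $\zzz = (\a,\ab)$ then yields $H_k(\a,\zzz;c) < -\e_1$ with $\z_2 = \ab \geq \aak > 0$.

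\textbf{Tail case $\a \in (1-\aak, 1]$.} Here $\zzz=(\a,\ab)$ fails the lower bound on $\z_2$, so I take $\zzz = (\a, \eta_k)$ for a constant $\eta_k = \eta_k(c,k) > 0$, continuous in $c$, to be chosen slightly larger than $\aak$. At $\a = 1$ all three of $cH(\a)$, $ck\a\ln(\a/\a)$, and $ck\ab\ln(\ab/\eta_k)$ vanish, leaving
\[
H_k(1, (1,\eta_k); c) \;=\; \ln\frac{f(\la(1+\eta_k)) + f(\la(\eta_k-1))}{2 f(\la)}.
\]
At $\eta_k=0$ this equals $\ln\bigl[(\cosh\la - 1)/(e^\la - 1 - \la)\bigr]$, strictly negative by $\sinh\la > \la$, with a continuous-in-$c$ upper bound $-\beta(\la)$. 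Choosing $\eta_k$ close to $\aak$ ensures $\eta_k \geq \aak \geq \ab$ throughout the tail range, so $ck\ab\ln(\ab/\eta_k) \leq 0$ and $cH(\a) \leq cH(\aak)$ is a small fixed constant, while the logarithmic term stays within $\beta(\la)/2$ of its $\eta_k=0$ value. Uniform continuity on the compact interval $\a \in [1-\aak, 1]$ then produces $H_k(\a,\zzz;c) \leq -\e_2(c,k)$ with $\e_2$ continuous in $c$. Setting $\e(c,k) = \min(\e_1, \e_2)$ and $\zn(c,k) = \aak$ completes the proof. The main obstacle is the balancing act for $\eta_k$: it must be close enough to $\aak$ (and not smaller) that $ck\ab\ln(\ab/\eta_k) \leq 0$ on the whole tail range, yet not so much larger that the logarithmic term ceases to dominate the residual $cH(\aak)$ contribution uniformly in $c$ and $k$.
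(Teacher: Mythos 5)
Your bulk case ($\a\in(1/2,\,1-\aak]$) is correct and is essentially the paper's own symmetry argument: the paper compares $H_k(\a,\zzz(\a);c)$ with $H_k(\ab,\zzz(\ab);c)$ using $f(x)\ge f(-x)$, and your computation is the special case $\zzz=(\ab,\a)$, which indeed needs only Claim~\ref{alarge} because you stop at $\ab\ge\aak$. The tail case, however, has a genuine gap. You force $\z_2=\eta_k\ge\aak$, a constant bounded away from $0$ uniformly in $c$, while the negativity you lean on, $\ln\frac{\cosh\la-1}{f(\la)}=-\beta(\la)$, degenerates as $c\downarrow 2/k$: there $\la=\psiinv(ck)\to0$ and $\beta(\la)=\Theta(\la)\to0$. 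Moving $\z_2$ from $0$ to $\eta_k\approx\aak$ changes the logarithmic term by a constant amount, not by something $\le\beta(\la)/2$: since $f(x)=x^2/2+O(x^3)$, at $\a=1$
\[
H_k\bigl(1,(1,\eta_k);c\bigr)=\ln\frac{f(\la(1+\eta_k))+f(\la(\eta_k-1))}{2f(\la)}
\;\longrightarrow\;\ln\bigl(1+\eta_k^2\bigr)>0
\qquad(\la\to0),
\]
and the other two terms vanish at $\a=1$. So for $c$ close to $2/k$ (e.g.\ $k=4$, $\eta_4\approx0.17$, limit $\approx+0.03$) your choice makes $\HH$ strictly positive at and near $\a=1$, and no $\e_2(c,k)>0$ exists; the assertion that the log term ``stays within $\beta(\la)/2$ of its $\eta_k=0$ value'' is false because the perturbation is of order $\aak^2+\Theta(\la)$ while $\beta(\la)=\Theta(\la)$. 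Shrinking $\eta_k$ with $c$ does not rescue a tail interval of fixed length $\aak$ either: once $\eta_k<\ab$, the term $ck\ab\ln(\ab/\eta_k)$ becomes positive and of order $\ln(1/\eta_k)$ at $\a=1-\aak$, so the tension is structural.

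The paper resolves exactly this by letting the tail interval shrink with $c$: it perturbs around $(\a,\z_1,\z_2)=(1,1,0)$, where $H_k=\ln\frac{\cosh\la-1}{f(\la)}<0$ for each fixed $c>2/k$, and by continuity gets $\d=\d(c,k)>0$ and $\e=\e(c,k)>0$ with $H_k(\a,(1-\d,\d);c)\le-\e$ only for $\a\in[1-\d,1]$; the range $\a\in(1/2,1-\d)$ is then covered by the reflection, which consequently must invoke Claim~\ref{asmall} (with its $\z_1=\Theta(\ab^{1/2})$) as well as Claim~\ref{alarge}, since $\ab$ can fall below $\aak$. Your proof needs the same structure: accept that the tail interval and $\z_2$ both depend on $c$ (the claim permits $\zn=\zn(c,k)$), or else give an argument for $\a\in(1-\aak,1]$ that survives the limit $\la\to0$.
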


\begin{proof}
For any $x >0$, $f(x) >f(-x)$;
this follows from $f(x)-f(-x)=e^x-e^{-x}-2x=2(\sinh(x)-x) > 0$,
the last inequality well known.
This gives
$$
\lim_{\a \to 1} H_k(\a,(\a,\ab);c)=\ln\frac{f(\la)+f(-\la)}{2f(\la)}<0 ,
$$
the equality immediate from 
\eqref{Hk2}
and the inequality from
$ck>2$ and thus $\la=\la(ck) >0$.
By continuity of $H_k(\a,(\z_1,\z_2);c)$ 
with respect to $\a$, $\z_1$ and $\z_2$,
there exist functions
$\delta=\delta(c,k)>0$ and $\e=\e(c,k)>0$,
both continuous in $c$, for which 
\begin{equation} \label{alpha1}
\sup_{\a\in [1-\delta,1]}
H_k(\a,(1-\d,\d);c)
 \leq -\e .
\end{equation}
This establishes the claim for 
$\a\in [1-\delta,1]$.

For $\a \in (\frac12,1-\d)$,
let $\zzz=(\z_1,\z_2)$
be given by $\z_1(\a)=\z_2(\ab)$, the latter 
determined by Claims \ref{asmall}--\ref{alarge}, and likewise 
$\z_2(\a)=\z_1(\ab)$.
Then, 
\begin{align*}
H_k(\a,\zzz(\a);c) 
 &= H_k(\a,(\z_2(\ab),\z_1(\ab));c) 
 \\& \leq H_k(\ab,(\z_1(\ab),\z_2(\ab));c) 
 = H_k(\ab,\zzz(\ab);c) .
\end{align*}
The 
inequality follows from \eqref{H=}:
for the first three terms of its right hand side by symmetry, 
and for its last term by applying the inequality $f(x) \geq f(-x)$,
with $x:=\z_2(\ab)-\z_1(\ab) \geq 0$
(in the proofs of Claims \ref{asmall}--\ref{alarge}, $\z_2 \geq \z_1$).
It follows that
\begin{align*}
H_k(\a,\zzz(\a);c) 
 \leq -\e,
\end{align*}
where $\e$ is chosen as the minimum of corresponding values 
in Claims \ref{asmall}--\ref{alarge}.
(Actually, 
here we need the value of $\d(c,k)$ chosen for \eqref{alpha1}
rather than the $\d(k)$ used in Claim~\ref{asmall}.
This goes through without difficulty since
$\d(c,k)$ is continuous in $c$, and
the corresponding $\e(c,k)$ needed in the last paragraph of 
the proof of Claim~\ref{asmall} 
is continuous in $\d$, and has no dependence on $c$ other than through $\d$.)

Finally, for $\zn(c,k)>0$ suitably chosen, we have
$\z_2 \geq \zn(c,k)>0$.
This follows because 
for $\a \geq 1-\d$ we have $\z_2=\d$, while
for $\a \in (1/2,1-\d)$ we have 
$\z_2(\a)=\z_1(\ab)$,
which by Claims \ref{asmall}--\ref{alarge} is
variously of order $\Theta(\ab^{1/2})$ or $\Theta(\ab)$,
and in either case bounded away from 0 since $\ab \geq \d$.
\end{proof}

This completes the claims used in proving Lemma~\ref{Hlemma}.

\section{More precise threshold behavior} \label{sharper}
With relatively little additional work, 
we can prove the prove the finer-grained threshold behavior
given by Theorem~\ref{sharp}.

\begin{proof}[Proof of Theorem~\ref{sharp}]
By a standard and general argument we may assume that $m/n$ has a limit.
We reason contrapositively.
If there is a sequence of $m$ and $n$
for which the desired probability 
(of satisfiability or unsatisfiability as the case may be) 
fails to approach 1 as claimed,
then it has a subsequence for which the probability approaches a 
value less than~1,
it in turn has a sub-subsequence for which $\lim m/n$ exists,
and by hypothesis it satisfies $2/k < \lim m/n \leq \infty$.
That is, if there is a counterexample, 
then there is one in which $m/n$ has a limit.
The case $\lim m/n \neq 1$ was already treated by Theorem~\ref{main},
so we assume henceforth that $\lim m/n=1$.

The unsatisfiable part of the theorem is immediate from Remark~\ref{>1}.

For satisfiability, we have $c=m/n<1$ and $c \to 1$.
We follow the outline of the proof of Theorem~\ref{main}.
Claim \ref{asmall} 
already treats $m/n=c$ in a closed interval including 1 and all $k \geq 3$.
(As Dubois and Mandler did not derive this sharper threshold,
here we must treat $k=3$.)
So does Claim~\ref{ahalf}, in its treatment of $\a$ near 1
and the symmetry argument elsewhere,
contingent upon Claim~\ref{alarge}.
That is, the previous analysis 
(encapsulated in the proof of Corollary~\ref{dupCor})
continues to apply to all terms in the sum
$
\sum_{\ell=2}^{m}\ex\bigl[\Ymnl\bigr]
$
\emph{except} those with $\ell/m = \a \in [\akp,1/2]$,
so that, in the current setting,
$$
\sum_{\ell=2}^{m}\ex\bigl[\Ymnl\bigr]
=O\bigl(m^{-(k-2)}\bigr)
 + 2 \sum_{\ell=\aak m}^{m/2}\ex\bigl[\Ymnl\bigr] .
$$

To prove Theorem~\ref{sharp} we split the final summand above
into two ranges, and will show that
\begin{align}
 \sum_{\ell=\aak m}^{\akpt m}\ex\bigl[\Ymnl\bigr] = O(n) e^{-\Omega(n)} 
 \label{sum1}
\end{align}
(this will be immediate from \eqref{Hkawayneg})
and 
\begin{align}
 \sum_{\ell=\akpt m}^{m}\ex\bigl[\Ymnl\bigr] 
  = O(1) \, \exp(-0.69 \: \omegan) 
 \label{sum2}
\end{align}
(shown in \eqref{sharpsum}).
Both of these require extending Claim \ref{alarge}
to the case where $c \to 1$ (no longer bounded away from~1),
deriving fresh bounds for $\a \in \bigint$,
$k \geq 3$.
The second requires additionally an extension of Lemma~\ref{expbounds}
through an improvement,
for $\z_1$ bounded away from 0,
to inequality \eqref{abless} 
and in turn to \eqref{Ymnlless} and \eqref{YlesseH}.

The monotonicity approach used to prove Claim~\ref{alarge},
allowing us to focus on $c=1$ (correspondingly, $\la=\lak$)
no longer applies because, 
with a vanishingly small gap between $\la$ and $\lak$,
the argument no longer bounds $\Hkala$ away from 0
(indeed we already remarked that $H_k(1/2,\lak)=0$).
In lieu of the use of monotonicity, though, 
as noted above
we can assume that $c$ is less than but arbitrarily close to 1 
(correspondingly, $\la < \lak$ is arbitrarily close to $\lak$).
We now consider the two ranges of $\a$ corresponding to the 
sums in \eqref{sum1} and \eqref{sum2}.

\mysubsubsec{Case $\a$ away from $1/2$} 
We will show that, for $k \geq 3$
and an appropriate $\zzz=\zzz(\a;c)$,
$\HH$ is bounded below 0 for $c$ sufficiently close to 1
and for $\a$ in a range extending 
above $\akstar$.
Specifically, we will show that
for all $k \geq 3$
there exist $\cm<1$ and $\e(k)>0$ such that 
\renewcommand{\akp}{\akpt} 
\begin{align}
\big(\forall \a \in \botintp \big) 
\,
\big(\forall c \in [\cm,1] \big) 
 \colon \quad 
 \HH & \leq -\e(k) . 
 \label{Hkawayneg}
\end{align}
For $k \geq 4$ this was established in Remark~\ref{RemAway}.
For $k=3$ we set
\begin{align}\label{barzs}
\zeta_1 &= \exp\bigl(-H(\a)/k \bigr) \, \a^{\frac{k-1}{k}},
&
\zeta_2 &= \exp\bigl(-H(\a)/k \bigr) \, \ab^{\frac{k-1}{k}} .
\end{align}
(For more on this choice see the discussion after \eqref{midrange}.)
We have $0.0990 < 0.99 \a_3$ and $\astar_3< \akpt$,
and using interval arithmetic we verify that for subintervals
on integral multiples of $0.0001$, that is
$[0.0990,0.0991],$ $\ldots,$ $[0.4629,0.4630]$,
the value of $\HHone$ on each subinterval is $<-0.0004$.
The interval arithmetic verification consists of 
defining $\zzz$ according to the interval's first endpoint,
then considering 
the extreme values of the possible results
in each monotone component calculation for $\HHone$ (see \eqref{H=})
to get rigorous lower and upper bounds on the 
true value anywhere in the interval.
Continuity in $c$ then gives \eqref{Hkawayneg}
for some $\cm$ sufficiently close to~1.

Inequality \eqref{sum1} is immediate
from \eqref{Hkawayneg} and \eqref{YlesseH}.

\mysubsubsec{Case $\a$ near $1/2$} 
For the remaining interval $\topintp$, 
$\HH$ is not bounded away from 0,
but we will establish a sufficient bound.
We again take $\zzz=(\a,\ab)$, 
so that $\HH=\Hkala$ (including for $k=3$).
Then, for all $k \geq 3$, 
for some $\cm<1$,
\begin{multline}
 \label{Hbeta}
\big(\forall \a \in \topintp \big) \, 
\big(\forall c \in [\cm,1] \big) 
 \colon \quad 
  \\ 
  \HH =
  \Hkala \leq 
 (c-1)  \Hap-\betak(1-2\a)^2 .
\end{multline}
To see this we follow the same reasoning as for \eqref{Hka},
including use of \eqref{entUB} and \eqref{fbound}
for the first inequality below:
\begin{align}
\Hkala
 &= 
 cH(\a)+\ln\frac{f(\la)+f(\la(1-2\a))}{2f(\la)}
 \notag \\ &=
 (c-1)H(\a) + (H(\a)-H(1/2))
       +\ln\left(1+\frac{f(\la(1-2\a))}{f(\la)}\right)
 \notag  \\ & \leq
 (c-1) \Hap - \frac{(1-2\a)^2}{2}
  \left(1-\frac{\la^2e^{\la(1-2\a)}}{f(\la)}\right) .
   \notag 
 \intertext{%
Since the derivative of $\frac{\la^2e^{\la(1-2\a)}}{f(\la)}$
is bounded uniformly over $\a$,
and making no presumption about the sign of the $O(\cdot)$ term,
this is}
    &=
  (c-1) \Hap -\frac{(1-2\a)^2}{2} 
    \left(1-\frac{\lak^2e^{\lak(1-2\a)}}{f(\lak)}+O(\la-\lak)\right)
 \label{Hbeta2}
\end{align}
Now observe that for $\a \in [\akpt,1/2]$,
using the definition \eqref{defakstar} of $\akstar$,
\begin{align*}
 \frac{\lak^2e^{\lak(1-2\a)}}{f(\lak)}
  &=
 \frac{\lak^2e^{\lak(1-2\akstar)}}{f(\lak)} \; e^{2\lak (\akstar-\a)}
  =
 e^{2\lak (\akstar-\a)}
 \\ & \leq
 e^{2 \la_3 (\astar_3 - \akpt)}
 \leq 
 0.9977.
\end{align*}
This and \eqref{Hbeta2} yield \eqref{Hbeta}.

\newcommand{\szzyy}{\sqrt{z_1^2-y^2}\,}
\newcommand{\abscosh}{| {\cosh (z_1 e^{i\vt}) -1} |}

\mysubsubsec{Improved bound on $\alnu$} 
We will 
need bounds on $\alnu$ and $\ex[\Ymnl]$
better than those in
\eqref{abless} and \eqref{Ymnlless}.
Reasoning as for \eqref{blessbetter}, from \eqref{aseries} we have
\begin{align}
\alnu
 &=
 \frac{(k\ell)!}{2\pi}\!\!
 \oint\limits_{{\mbox{\Large${z=z_1e^{i\vt} \colon 
     \atop\vt\in (-\pi,\pi]}$}}\!\!}
 \frac {(\cosh z -1)^{\nu}} {z^{k\ell+1}} \, dz
 \notag \\ &\leq
 2 \, \frac {{(k\ell)!}} {2\pi \, z_1^{k\ell+1}} \!\!
  \int_{-\pi/2}^{\pi/2}
     \exp\parens{ \nu \; \ln\abscosh }
 \, d\vt 
 \notag
 \\&=
 O(1 / \sqrt \nu) \:  {(k\ell)!}
 \frac {(\cosh z_1 -1)^{\nu}} {z_1^{k\ell+1}} .
 \label{a3}
\end{align}
The final equality is by the Laplace method for integrals;
see for example de Bruijn~\cite{deBruijn}. 
Roughly, the Laplace method says that
if $f(x)$ is maximized on $[a,b]$ by $x_0$ then, asymptotically in $n$,
$\int_a^b e^{n f(x)} dx 
 = (1+o(1)) e^{n f(x_0)} \sqrt{\frac{2\pi}{n(-f''(x_0))}}
$.
The maximum of $\abs{\cosh(z_1 e^{i\vt}-1}$ 
occurs iff $\vt$ is a multiple of $\pi$,
as is clear from the Taylor series expansion
$\cosh(z_1 e^{i\vt}-1)
 = \sum_{j=1}^\infty \frac1{(2j)!} {z_1}^{2j} e^{i \cdot 2j\vt}$.
The modulus of this expression is
 $\sum_{j=1}^\infty \frac1{(2j)!} {z_1}^{2j}$
when $\vt$ is multiple of $\pi$, and only then
(for this to be the case all the arguments $2j\vt$ must be equal modulo $2\pi$,
requiring $\vt$ to be a multiple of $\pi$).
In the range $[-\pi,\pi]$, then, the unique maximum is at $\vt=0$.
Letting
\begin{align*}
 s(\vt)
  & := \ln\abscosh 
  =  \ln( \cosh(z_1 \cos \vt) - \cos(z_1 \sin \vt)) ,
\end{align*}
the second derivative at the maximum is\begin{align*}
\left. \frac{d^2 s}{d\vt^2} \right|_{\vt=0} 
 &= - \frac{z_1 (\sinh(z_1)-z_1)}{\cosh(z_1)-1}
 = -\Theta(1),
\end{align*}
since $z_1=\Theta(1)$.

\mysubsubsec{Improved bound on $\ex[\Ymnl]$}
Note that the bound on $\alnu$ in \eqref{a3} is $\nuf$ times 
the previous bound given by \eqref{abless},
since $z_1 = \z_1 \la$ 
and we presume throughout that $\la$ is bounded away from 0,
which represents an improvement when $\z_1=\Theta(1)$.
It immediately gives a corresponding improvement
to the bound on $\plnu$ from \eqref{peq}:
where $\Tnu$ represents the RHS of \eqref{peq}
(the notation focuses on the parameter of interest,
but recall that $\Tnu$ also depends on $z_1$, $z_2$, $\la$, $k$, $m$ and
$\ell$), 
we now have
\begin{align}
 \label{pnu}
  \plnu  &= \nuf \, \Tnu .
\end{align}
This improves the summands of \eqref{EYmn1},
but the $1/\sqrt \nu$ stops us from applying the binomial theorem
to obtain an analog \eqref{Ymnlless};
one additional step is needed.

As we did for \eqref{Ymnlless}, restrict $z_1$ and $z_2$
to depend only on $\ell$, $m$ and $n$ (not on $\nu$).
Then the maximum of $\nuf \, \Tnu$
can be seen to occur where the ratio of consecutive terms,
$$
\frac{(1/\sqrt{\nu+1}) \, \Tnux{\nu+1}}{(1/\sqrt \nu) \, \Tnu} =
\sqrt\frac{\nu}{\nu+1} \;\; \frac{n-\nu}{\nu+1} \;\;
 \frac{e^{z_2}(\cosh z_1-1)} {f(z_2)} ,
$$
is 1, which occurs
at some $\nu_0=\Theta(n)$.
Terms before $\nu_0/2$ are exponentially smaller than the maximum,
while later terms are of order 
$\nuf \, \Tnu$ $=$ $\nf \, \Tnu$.
Thus, 
\begin{align*}
 \sum_{\nu=1}^n & \binom n \nu  \plnu 
 \\ & = 
   O(1) \sum_{\nu=1}^{\nu_0/2} \binom n \nu (1/(z_1 \sqrt \nu)) \, \Tnu
 + O(1) \sum_{\nu=\nu_0/2}^n \binom n \nu (1/(z_1 \sqrt \nu)) \, \Tnu
 \\& = 
   O(n) \exp(-\Theta(n)) \Tnux{\nu_0}
   + \nf \sum_{\nu=1}^n \binom n \nu \, \Tnu 
 \\& = 
  \nf \sum_{\nu=1}^n \binom n \nu \, \Tnu ,
\end{align*}
an analog of \eqref{EYmn1} but smaller by $\nf$.
To this we can apply the binomial theorem, as we did to \eqref{EYmn1},
giving a corresponding improvement to \eqref{Ymnlless}
and in turn \eqref{YlesseH}, namely
\begin{align}
\ex\bigl[\Ymnl\bigr]
  & \leq
 O(1) \frac1{\z_1 \, \sqrt{n \, \z_2}} \,
 \exp\bigl[n H_k(\a,\zzz;c)\bigr],
\quad \forall \zzz>0 .
 \label{EYcrit}
\end{align}

 From \eqref{Hbeta} and \eqref{EYcrit},
\begin{align}
\sum_{\ell=\akp m}^{m/2}
 \ex[\Ymnl]
 & \leq
\sum_{\ell=\akp m}^{m/2}
 \Ortn \exp\parens{ n \HH}
 \notag \\ & \leq
\Ortn 
 \sum_{\ell=\akp m}^{m/2}
 \exp\parens{ 
 (c-1) \, n \, H(\akp)
  -\betak(1-2 \ell/m)^2 n 
  }
  \notag \\ & \leq 
\Ortn \exp\parens{(m-n) H(\akp)} 
 \sum_{x=0}^{\infty}
 \exp\parens{ - \betak (x/m)^2 n} 
 \notag \\ &= 
\Ortn \exp(-H(\akp) \: \omegan) \; O(\sqrt n)
 \notag \\ & 
 = O(1) \, \exp(-\sharpconst \: \omegan)   \label{sharpsum}
 .
\end{align}
This establishes \eqref{sum2} and concludes the proof of Theorem~\ref{sharp}.
\end{proof}

\newcommand{\gk}{g_k}
\newcommand{\gkx}{\gk(x)}
\newcommand{\chat}{\hat{c}}
\newcommand{\core}{H_2}
\newcommand{\mus}{{\mu^*}}

\section{\texorpdfstring%
{Satisfiability threshold for unconstrained $k$-XORSAT}%
{Satisfiability threshold for unconstrained k-XORSAT}}
\label{Unconstrained}
If a variable appears in at most one equation,
then deleting that variable, along with the corresponding equation if any,
yields a linear system that, clearly, 
is solvable if and only if the original system was.
Stop this process when each variable appears in at least two equations,
or when the system is empty.
Dubois and Mandler analyzed unconstrained $3$-XORSAT by analyzing this
process, which ends with a (possibly empty) constrained 3-XORSAT instance.

Regarding each variable as a vertex and each equation as a hyperedge
on its $k$ variables yields 
the $k$-uniform ``constraint hypergraph'' underlying a $k$-XORSAT instance.
The process described simply restricts the instance to the 2-core of its
hypergraph.
The analysis by Dubois and Mandler for 3-XORSAT is easily generalized
to $k$-XORSAT using the (later) analyses of the 2-core of
a random $k$-uniform hypergraph, 
and we take this approach.

Note that the our (unconstrained) $k$-XORSAT model really corresponds to a 
random $k$-uniform \emph{multi}-hypergraph.
However, the probability that a random matrix corresponds to a simple graph is
${\binom n k}_{(m)} \, / \, {\binom n k}^m = 1-O(n^{-(k-2)}) = 1-o(1)$.
Thus any a.a.s.\ property of a simple random hypergraph
is also an a.a.s.\ property for random $k$-XORSAT,
and we shall proceed with the simple random hypergraph model.

It is well known that the 2-core of a uniformly random $k$-uniform hypergraph
is, conditioned on its size and order, uniformly random among all
such $k$-uniform hypergraphs with minimum degree~2.
(One short and simple proof is identical to that for
conditioning on the core's degree sequence in \cite[Claim~1]{Molloy}.)
Also, the ``core'' of a random $k$-XORSAT instance
is an instance uniformly random on its underlying hypergraph:
the (uniform) hypergraph core determines the core $A$ matrix,
while the core $b$ is simply the restriction of 
its uniformly random initial value to the surviving rows of $A$,
a process oblivious to $b$.

Thus, satisfiability of a random unconstrained instance hinges on the
edges-to-vertices ratio
of the core of its 
constraint hypergraph.

Recall the definition of $\l$ from \eqref{lambda}.

\begin{theorem} \label{unconstrained}
Let $Ax=b$ be a uniformly random unconstrained uniform random $k$-XORSAT
system with $m$ equations and $n$ variables.
Suppose that $k \geq 3$ and $m/n \to \infty$ with $\lim m/n=c$.
Define 
\begin{align*}
  \gkx := \frac{x}{k (1-e^{-x})^{k-1}} .
\end{align*}
With $\cstark = \gk(\l(k))$,
if $c<\cstark$ then $Ax=b$ is a.a.s.\ satisfiable,
and if $c>\cstark$ then $Ax=b$ is a.a.s.\ unsatisfiable.
\end{theorem}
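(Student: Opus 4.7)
The plan is a two-step reduction: first peel the unconstrained instance down to a constrained $k$-XORSAT problem on the 2-core of the constraint hypergraph (preserving satisfiability), then apply Theorem~\ref{main}/\ref{sharp} to the core, using the standard parametric formulas for the core's size to translate the density-$1$ threshold for the core into the stated threshold $\cstark=\gk(\l(k))$ for the original density $c=m/n$.

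First I would formalize the peeling reduction. A variable appearing in at most one equation may be deleted together with its equation (if any) without affecting satisfiability; iterating until no such variable remains leaves the 2-core of the underlying $k$-uniform constraint (multi-)hypergraph, which by the paragraph preceding the theorem statement is, conditionally on its order $\hat n$ and size $\hat m$, uniform among $k$-uniform hypergraphs on $\hat n$ vertices with $\hat m$ edges and minimum degree $\geq 2$. Since the entries of $b$ corresponding to surviving equations remain uniform and independent of $A$, the core instance is itself a uniformly random \emph{constrained} $k$-XORSAT instance with parameters $(\hat n,\hat m)$.

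Next I would invoke the standard 2-core asymptotics for random $k$-uniform hypergraphs of density $c$ (Cooper, Kim, Molloy): letting $c_k^{\mathrm{core}}:=\min_{x>0}\gkx$ be the 2-core emergence threshold, for $c>c_k^{\mathrm{core}}$, with $x=x(c)$ the larger positive root of $c=\gkx$, one has a.a.s.
\[
\hat n/n\to e^{-x}f(x),\qquad \hat m/n\to c(1-e^{-x})^{k},
\]
while for $c<c_k^{\mathrm{core}}$ the core has size $o(n)$. A direct calculation using $c=\gkx$ then yields
\[
\frac{\hat m}{\hat n}\to\frac{c(1-e^{-x})^{k}}{e^{-x}f(x)}=\frac{x(e^{x}-1)}{k\,f(x)}=\frac{\psi(x)}{k}.
\]
Since $\psi$ is strictly increasing on $(0,\infty)$ with $\psi(0)=2$ (Claim~\ref{psi''}) and $\psi(x)\to\infty$, the limiting ratio $\psi(x(c))/k$ grows strictly from above $2/k$ as $c$ increases past $c_k^{\mathrm{core}}$, crossing $1$ precisely at $x(c)=\psi^{-1}(k)=\l(k)$, i.e.\ at $c=\gk(\l(k))=\cstark$.

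Finally I would conclude by applying Theorem~\ref{main} (or Theorem~\ref{sharp}) to the core. For $c<\cstark$: if $c<c_k^{\mathrm{core}}$ the core has $o(n)$ edges so is a.a.s.\ satisfiable (via Theorem~\ref{sharp}); if $c_k^{\mathrm{core}}<c<\cstark$, then a.a.s.\ $\hat m/\hat n\to\psi(x(c))/k\in(2/k,1)$, and Theorem~\ref{main} gives a.a.s.\ satisfiability of the core, hence of the original. For $c>\cstark$, symmetrically $\hat m/\hat n\to\psi(x(c))/k>1$ a.a.s., so Theorem~\ref{main} gives a.a.s.\ unsatisfiability of the core, and hence of the original instance since peeling preserves satisfiability. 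The argument is mostly bookkeeping: the substantive step is the identity $\hat m/\hat n=\psi(x)/k$, which is what makes $\l(k)=\psi^{-1}(k)$ the natural critical parameter, and the main nuisance is handling the sub-threshold regime $c<c_k^{\mathrm{core}}$ (where the core is sub-linear) separately from the main regime where it has linear size and one can directly invoke Theorem~\ref{main}.
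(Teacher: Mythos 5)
Your proposal follows essentially the same route as the paper (peel to the 2-core, use the hypergraph core asymptotics to get $\hat m/\hat n \to \psi(x(c))/k$, then invoke Theorems~\ref{main}/\ref{sharp}), but it skips the one substantive verification in the paper's proof. You assert that as $c$ increases past the core-emergence threshold $\chat=\min_{x>0}\gkx$, the \emph{larger} root $x(c)$ of $\gkx=c$ crosses $\l(k)=\psi^{-1}(k)$ exactly at $c=\gk(\l(k))$. That is only true if $\l(k)$ lies on the increasing branch of $\gk$, i.e.\ $\l(k)$ is at least the minimizer of $\gk$, equivalently $\gk'(\l(k))\ge 0$; only then is $\l(k)$ the larger root of $\gk(x)=\cstark$, and only then does the core density cross $1$ at $c=\cstark$. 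If instead $\l(k)$ lay on the decreasing branch, then for every $c>\chat$ the larger root would exceed the minimizer, which would exceed $\l(k)$, so the core density $\psi(x(c))/k$ would already be above $1$ at emergence and $\gk(\l(k))$ would not be the threshold. The paper devotes an explicit computation to exactly this point: with $\mus=\l(k)$ it shows $\gk'(\mus)>0$ by substituting $k=\psi(\mus)$ and reducing to the inequality
\begin{equation*}
(e^{\mus}+\mus-1)(e^{\mus}-1)-\mus^2(e^{\mus}-1-\mus)>0,
\end{equation*}
verified via a Taylor-series identity. Your argument needs this (or an equivalent) step; without it the claim ``crossing $1$ precisely at $x(c)=\l(k)$'' is unjustified.

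A secondary slip: for $c<\chat$ you say the core has $o(n)$ edges and cite Theorem~\ref{sharp}. That theorem does not apply to an arbitrary sublinear constrained instance (you have no control on its edge-to-vertex ratio, and a small core could in principle be unsatisfiable); the correct statement, which the paper uses, is Molloy's result that for $c<\chat$ the 2-core is a.a.s.\ \emph{empty}, so the peeled system is trivially satisfiable. Finally, a complete write-up should include the routine translations the paper records: from the Bernoulli model $H_p$, in which the core asymptotics are stated, to the fixed-$m$ model, and from the multi-hypergraph underlying random $k$-XORSAT to the simple hypergraph (an event of probability $1-o(1)$). These are bookkeeping, but the branch verification above is a genuine missing step.
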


\begin{proof}
We treat $k$ as fixed.
Restricting consideration to $x>0$,
from Molloy \cite[proof of Lemma 4]{Molloy},
$\gkx$ has a unique minimum $\chat$,
with $\gkx=c$ having no solutions for any $c<\chat$,
and two solutions for any $c>\chat$.
Simple calculus confirms
that for $k \geq 3$, $\gk$ is unimodal
(indeed, convex).

Let $H$ be a random $k$-uniform hypergraph with $m$ edges and $n$ vertices.
Molloy \cite[Theorem~1]{Molloy}
shows that if $\lim m/n<\chat$ then the 2-core is a.a.s.\ empty,
while if $\lim m/n=c>\chat$,
then with $\mu$ the larger solution of $\gk(\mu)=c$,
the order $\nn$ and size $\mm$ of the 2-core a.a.s.\ satisfy
\begin{align*}
\nn &= n \, \frac{e^\mu-1-\mu}{e^\mu} + o(n) ,
 & 
\mm &= n \, \frac {\mu (e^\mu-1)}{k e^\mu} + o(n) ;
\end{align*}
see also Achlioptas and Molloy \cite[Proposition 30]{AM}.
Actually, Molloy works in the Bernoulli model
where the number of edges of the hypergraph $H_p$ is $\Bin(\binom n k,p)$,
but the result translates to the above by standard arguments. 
Specifically, choose $p$ so that the expected number of edges of $H_p$ is $m$.
Generate a random $H$ with exactly $m$ edges as follows: 
generate $H_p$;
if it has $m$ edges or more, which with constant probability it does,
then randomly subsample $H_p$ to give $H$;
otherwise repeat.
The core of $H$ is contained in that of $H_p$, 
so $M$ and $N$ will not be larger than the bounds given for the Bernoulli
model, 
with failure probability a constant times the failure probability
of that for the Bernoulli model.
Similarly, generating $H$ by randomly augmenting an $H_p$ having
$m$ edges or fewer shows that $M$ and $N$ will not be smaller than 
the bounds given.

It follows for the core that, a.a.s.,
\begin{align}
\frac \mm \nn
 &= \frac {\mu (e^\mu-1)} {k (e^\mu-1-\mu)} + o(1) 
 = \frac1k \psi(\mu) + o(1) .
 \label{coremn}
\end{align}

Define $\mus=\la(k)$ so that $\psi(\mus)=k$;
remember from \eqref{lambda} that for $k>2$ this is well defined,
with $\mus>0$.
We claim that $\mus$ is the larger of the two values of $\mu$ for
which $\gk(\mu) = \gk(\mus)$.
Given that $\gk$ is unimodal, this is true iff $\gk'(\mus)>0$.
Now,
$$
\gk'(\mus) = \frac{1+e^{-\mus} (\mus-\mus k-1)}{k(1-e^{-\mus})^k}
.
$$
Focusing on the numerator, multiplying through by $e^\mus$,
and replacing $k=\psi(\mus)$,
this means showing that 
\begin{align*}
 e^\mus+\mus-1 - \mus
  \parens{ \frac{\mus(e^\mus-1-\mus)}{e^\mus-1} } 
 &>0 .
\end{align*}
Multiplying the expression by $e^\mus-1$ gives
\begin{multline*}
 (e^\mus+\mus-1) (e^\mus-1)
  - \mus^2 (e^\mus-1-\mus)
  \\ =
  (e^\mus-1-\mus-\tfrac12 \mus^2)^2 
   + 3 \mus (e^\mus-1-\tfrac13 \mus-\tfrac1{12} \mus^3) > 0
\end{multline*}
as desired. The inequality is immediate from the 
Taylor series for $e^\mus$, as $\mus>0$.

Let $\cstark = \gk(\mus)$.
Because $\mus$ is the larger of the two values $\mu$ for which
$\gk(\mu)=\gk(\mus)$, we may apply \eqref{coremn},
concluding that a random $k$-uniform hypergraph with 
$\lim m/n = \cstark = \gk(\mus)$
has a core where, a.a.s., $\mm/\nn = \frac1k \psi(\mus)+o(1)= 1+o(1)$.

For any $c >\cstark$, the larger solution $\mu$ of $\gk(\mu)=c$
has $\mu>\mus$ (by the unimodality of $\gk$),
and $\psi(\mu)>\psi(\mus)=k$ 
(by Claim~\ref{psi''}).
Thus, a random $k$-uniform hypergraph with 
$\lim m/n = c > \cstark$
has a core where, a.a.s., $\mm/\nn = \frac1k \psi(\mu)+o(1) > 1$.
By this section's introductory remarks 
it follows that a random $k$-XORSAT instance with
$\lim m/n = c > \cstark$
reduces to a random constrained $k$-XORSAT instance 
with $\mm/\nn$ converging in probability to a value greater than $1$,
the reduced instance is a.a.s.\ unsatisfiable,
and thus so is the original instance.

By the same token,
if $c < \cstark$ then 
either $\gk(\mu)=c$ has no solution (if $c<\chat$), or
its larger solution 
has $\mu<\mus$ 
and $\psi(\mu)<\psi(\mus)=k$.
Thus, a random $k$-XORSAT instance with
$\lim m/n = c < \cstark$
reduces to a constrained $k$-XORSAT instance that 
either is a.a.s.\ empty (and trivially satisfied),
or has $\mm/\nn$ converging in probability to a value less then $1$,
and thus is a.a.s.\ satisfiable by Theorem~\ref{main}. 
Thus the original instance is a.a.s.\ satisfiable.
\end{proof}

\section*{Acknowledgments}
We are most grateful to Paul Balister:
our proof that the critical exponent can be made negative
owes a great deal to his
detailed suggestions on using patchwork functional approximations
to get finitely away from critical points,
and interval arithmetic elsewhere \cite{Balister}.
We are also grateful to Mike Molloy for helpful comments,
to Colin Cooper, Alan Frieze, and Federico Ricci-Tersenghi 
for pointing out related work,
and to Noga Alon for suggesting we aim for Theorem~\ref{sharp}.
Finally, we sincerely thank the anonymous referees for their 
very careful reading and many helpful comments.

\end{document}